\documentclass[12pt]{article} 

\usepackage[margin=3cm]{geometry}
\geometry{a4paper}
\usepackage[greek, english]{babel}
\usepackage[pagebackref]{hyperref}
\usepackage{amsmath}
\usepackage{amsthm}
\usepackage{amssymb}
\usepackage{graphics}
\usepackage{ucs}
\usepackage[all]{xy}
\usepackage{enumerate}
\usepackage[utf8x]{inputenc}
\usepackage[T1]{fontenc}
\usepackage{xcolor}
\usepackage{tikz}
 \usepackage{setspace}
 \setstretch{1.08}

\DeclareMathOperator{\supp}{\mathrm{supp}}

\newcommand{\R}{\mathbb R}
\newcommand{\N}{\mathbb N}

\newcommand{\Z}{\mathbb Z}

\def\pmp{p.m.p.}

\newcommand{\URS}{\mathrm{URS}}
\newcommand{\IRS}{\mathrm{IRS}}
\newcommand{\Stab}{\mathrm{Stab}}
\newcommand{\Fix}{\mathrm{Fix}}
\newcommand{\Sub}{\mathrm{Sub}}

\newcommand{\onto}{\twoheadrightarrow}

\newtheorem{thm}{Theorem}[section]
\newtheorem{cor}[thm]{Corollary}

\newtheorem{lem}[thm]{Lemma}
\newtheorem{prop}[thm]{Proposition}

\theoremstyle{definition}

\newtheorem{qu}[thm]{Question}
\newtheorem*{qu*}{Question}
\newtheorem*{mainqu*}{Main Question}

\newtheorem{df}[thm]{Definition}
\newtheorem{rmq}[thm]{Remark}

\makeatletter
\newcommand{\oset}[3][0ex]{%
 \mathrel{\mathop{#3}\limits_{
 \vbox to#1{\kern-5\ex@
 \hbox{$\scriptstyle#2$}\vss}}}}
\makeatother

\title{Continuum of allosteric actions for non-amenable surface groups}
\author{Matthieu Joseph}

\date{}
\begin{document}

	\maketitle


\newcommand{\Addresses}{{
 \bigskip
 \footnotesize\noindent M. Joseph, \textsc{Université de Lyon, ENS de Lyon, Unité de Mathématiques Pures et Appliquées, 46,
allée d’Italie 69364 Lyon Cedex 07, FRANCE}\par\nopagebreak\noindent
 \textit{E-mail address: }\texttt{matthieu.joseph@ens-lyon.fr}
}}

	\begin{abstract}
Let $\Sigma$ be a closed surface other than the sphere, the torus, the projective plane or the Klein bottle. We construct a continuum of \pmp{} ergodic minimal profinite actions for the fundamental group of $\Sigma$, that are topologically free but not essentially free, a property that we call allostery. Moreover, the IRS's we obtain are pairwise distincts. \end{abstract}
\noindent\textbf{MSC:} 37A15, 37B05, 20E08, 20E26, 20E15. \\
\textbf{Keywords:} Measure-preserving actions, minimal actions, non-free actions, profinite actions, surface groups, IRS, URS, space of subgroups.


\section{Introduction}

Let $\Gamma$ be a countable discrete group. Let $\alpha$ be a minimal action of $\Gamma$ on a compact Hausdorff space $C$. The action $\alpha$ is \emph{\textbf{topologically free}} if for every non-trivial element $\gamma\in\Gamma$, the set
$\{x\in C\mid \alpha(\gamma)x=x\}$ has empty interior. This notion of freeness can be characterized by the triviality of the URS associated with the action $\alpha$ as follows. Let $\Sub(\Gamma)$ be the space of subgroups of $\Gamma$, and let $\Stab_\alpha : C\to\Sub(\Gamma)$ be the Borel map defined by
\[\Stab_\alpha(x):=\{\gamma\in\Gamma\mid \alpha(\gamma)x=x\}.\]
Here $\Sub(\Gamma)$ is equipped with the topology of pointwise convergence which turns it into a compact totally disconnected topological space on which $\Gamma$ acts continuously by conjugation. Glasner and Weiss proved in \cite{GlasnerWeiss} that there exists a unique closed, $\Gamma$-invariant, minimal subset in the closure of $\{\Stab_\alpha(x)\mid x\in C\}$, called the \emph{\textbf{stabilizer Uniformly Recurrent Subgroup}}, stabilizer URS for short, associated with the minimal action $\alpha$, that we denote by $\URS(\alpha)$. The stabilizer URS is trivial if it is equal to $\{\{1\}\}$. One of the feature of the stabilizer URS associated with a minimal action $\alpha$ is that its triviality is equivalent to the topological freeness of $\alpha$, see Lemma \ref{lem.URStrivial}.

Let $(X,\mu)$ be a standard probability measure space, and let $\beta$ be a probability measure preserving (hereafter \pmp{}) action of a countable group $\Gamma$ on $(X,\mu)$. The action $\beta$ is \textbf{\emph{essentially free}} if for every non-trivial $\gamma\in\Gamma$, the set $\{x\in X\mid \alpha(\gamma)x=x\}$ is $\mu$-negligible. The measurable counterpart of the stabilizer URS is the \emph{\textbf{stabilizer Invariant Random Subgroup}}, stabilizer IRS for short, associated with $\beta$. It is defined as the $\Gamma$-invariant Borel probability measure $(\Stab_\beta)_*\mu$ on $\Sub(\Gamma)$, and is denoted by $\IRS(\beta)$. A stabilizer IRS is the prototype of an \emph{\textbf{IRS}}, which is a Borel probability measure on $\Sub(\Gamma)$ that is invariant under the conjugation action of $\Gamma$. The \emph{\textbf{trivial IRS}} is the Dirac measure at the trivial subgroup.
Observe that $\IRS(\beta)$ is trivial if and only if $\beta$ is essentially free. Abért, Glasner and Virág proved that every IRS is in fact a stabilizer IRS for some \pmp{} action, see \cite{AbertGlasnerVirag}.

An \emph{\textbf{ergodic minimal action}} $\Gamma\curvearrowright (C,\mu)$ is a minimal action of $\Gamma$ on a compact Hausdorff space $C$ together with a $\Gamma$-invariant ergodic Borel probability measure $\mu$. Thus an ergodic minimal action has both a stabilizer URS and a stabilizer IRS. It is a classical result that the essential freeness of an ergodic minimal action implies its topological freeness, see Lemma \ref{lem.essfreeimpliestopfree}. In other words, if the stabilizer IRS of an ergodic minimal action is trivial, then its stabilizer URS is trivial. The present article provides new counterexamples in the study of the converse.

\begin{df} An ergodic minimal action is \emph{\textbf{allosteric}}\footnote{\textgreek{ἄλλος}: other, \textgreek{στερεός}: fix, firm, solid, rigid} if it is topologically free but not essentially free. A group is allosteric if it admits an allosteric action.
\end{df}


\begin{mainqu*}
What is the class of allosteric groups?
\end{mainqu*}

First, let us discuss examples of groups that don't belong to this class. It is the case for groups whose ergodic IRS's are all atomic, i.e., equal to the uniform measure on the set of conjugates of a finite index subgroup. Indeed, we prove in Proposition \ref{prop.atomless} that the IRS of an ergodic minimal action which is topologically free is either trivial, or has no atoms. Thus, if $\Sub(\Gamma)$ is countable, then $\Gamma$ is not allosteric, see Corollary \ref{cor.SubCountable}. Examples of groups with only countably many subgroups are: finitely generated nilpotent groups, more generally polycyclic groups, extensions of Noetherian groups by groups with only countably many subgroups (e.g. solvable Baumslag-Solitar groups $BS(1,n)$), see \cite{BeckerLubotzkyThom}, or Tarski monsters.

There are also groups whose ergodic IRS's are all atomic for other reasons. For instance, this is the case for lattices in simple higher rank Lie groups \cite{StuckZimmer}, commutator subgroups of either a Higman-Thompson group or the full group of an irreducible shift of finite type \cite{DudkoMedynets}, projective special linear group $\mathrm{PSL}_n(k)$ over an infinite countable field $k$ \cite{PetersonThom}. See also \cite{Creutz}, \cite{CreutzPeterson} or \cite{Bekka} for other examples of groups with few ergodic IRS's. Thus, none of these groups are allosteric, because of their lack of IRS's.

More surprisingly, there exists non-allosteric groups with plenty of ergodic IRS's, such as countable abelian groups which admit uncountably many subgroups. Indeed, if $\Gamma$ is such a group, then any Borel probability measure on $\Sub(\Gamma)$ is an IRS, but $\Gamma$ is not allosteric since any minimal $\Gamma$-action which is topologically free is actually essentially free for any invariant measure, see Remark \ref{rmq.totipotent}.Another example is given by the group $\text{FSym}(\N)$ of finitely supported permutations on $\N$, as well as its alternating subgroup $\text{Alt}(\N)$. They both admit a lot of ergodic IRS's, see \cite{Vershik} and \cite{ThomasTuckerDrob}. However, an argument similar to that of Lemma $10.4$ in \cite{ThomasTuckerDrob} implies that neither $\text{FSym}(\N)$ nor $\text{Alt}(\N)$ is allosteric.



Let us now discuss examples of allosteric groups. Bergeron and Gaboriau proved in \cite{BergeronGaboriau} that if $\Gamma$ is non-amenable and isomorphic to a free product of two non-trivial residually finite groups, then $\Gamma$ is allosteric. We refer to Remark \ref{rmq.BG} for a more precise statement of their results. In \cite{AbertElek}, Abért and Elek independently proved that finitely generated non-abelian free groups are allosteric, and in \cite{AbertElek2}, they proved that the free product of four copies of $\Z/2\Z$ admits an allosteric action whose orbit equivalence relation is measure hyperfinite. In all \cite{BergeronGaboriau}, \cite{AbertElek} and \cite{AbertElek2}, the allosteric actions obtained are in fact profinite, see Section \ref{sec.profinite} for a definition. These were the first known examples answering a question of Grigorchuk, Nekrashevich and Sushchanskii in \cite[Problem 7.3.3]{GrigorchukNekrashevichSushchanskii} about the existence of profinite allosteric actions.

The main result of this article is to prove that non-amenable \emph{\textbf{surface groups}}, that is fundamental groups of closed surfaces other than the sphere, the torus, the projective plane or the Klein bottle, are allosteric. More precisely, we prove the following result.

\begin{thm}\label{thm.main} Any non-amenable surface group admits a continuum of profinite allosteric actions that are pairwise topologically and measurably non-isomorphic.\end{thm}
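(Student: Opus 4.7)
The plan is to adapt the strategy used by Bergeron--Gaboriau \cite{BergeronGaboriau} and Abért--Elek \cite{AbertElek} for free products of residually finite groups to the amalgamated structure of a non-amenable surface group. First I would decompose $\Gamma=\pi_1(\Sigma)$ as a graph of groups over an infinite cyclic edge group: since $\chi(\Sigma)<0$, splitting $\Sigma$ along a two-sided essential simple closed curve $c$ yields either $\Gamma=A*_CB$ (separating case) or $\Gamma=A*_C$ (non-separating case), where $C=\langle c\rangle\cong\Z$ and each vertex group is the fundamental group of a compact surface with boundary, hence a free group of rank $\geq 2$. Using residual finiteness together with Scott's LERF theorem for surface groups, I then build a decreasing sequence $\Gamma=\Gamma_0\supseteq\Gamma_1\supseteq\cdots$ of finite-index subgroups with $\bigcap_n\Gamma_n=\{1\}$, coherently adapted to the graph-of-groups decomposition. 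The inverse limit $X:=\varprojlim\Gamma/\Gamma_n$ with its Haar probability measure $\mu$ is then automatically a minimal ergodic \pmp{} $\Gamma$-space, and the stabilizer of a point $x=(g_n\Gamma_n)_n$ equals $\bigcap_n g_n\Gamma_n g_n^{-1}$.

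Allostery is obtained by imposing two competing conditions on the chain. For non-essential freeness, I would arrange that a fixed non-trivial element $\gamma_0\in C$ lies in every $\Gamma_n$ and that the density of cosets $g\Gamma_n$ satisfying $g^{-1}\gamma_0 g\in\Gamma_n$ stays bounded below along the chain; the graph-of-groups structure affords this via finite quotients that preserve $C$ setwise. For topological freeness, I would enumerate all pairs $(\gamma,g)\in(\Gamma\setminus\{1\})\times\Gamma$ and use residual finiteness to pick, at successive levels, some $\Gamma_m$ with $g^{-1}\gamma g\notin\Gamma_m$, which guarantees that every coset admits a lift moved by $\gamma$ and hence that $\Fix(\gamma)\subset X$ has empty interior. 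The hard part is combining these two requirements on a single chain: keeping $\gamma_0$ alive with uniformly positive density while simultaneously killing open fixed sets for every other element. The main obstacle is that the amalgamating subgroup is infinite rather than trivial, so the naive free-product construction has to be refined; this is where careful use of the residual finiteness of $C$ and a bookkeeping/diagonal argument along the pairs $(\gamma,g)$ will be required.

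For the continuum I would exploit that at each stage there are many admissible extensions $\Gamma_{n+1}\subseteq\Gamma_n$ meeting both constraints. Parametrising a binary branching of such choices by $\epsilon\in\{0,1\}^\N$ produces $2^{\aleph_0}$ chains and hence $2^{\aleph_0}$ profinite allosteric actions $(X_\epsilon,\mu_\epsilon)$. To ensure that they are pairwise non-isomorphic, I would use the parameter $\epsilon$ to control the $\mu_\epsilon$-measure of $\Fix(\gamma)$ for a countable family of witness elements $\gamma$: tuning these values to encode $\epsilon$ injectively produces pairwise distinct IRS's (giving measurable non-isomorphism) and pairwise distinct URS's (giving topological non-isomorphism, in light of Lemma~\ref{lem.URStrivial} and its variants for non-trivial URS's). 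A simple diagonalisation ensures that the multitude of choices at each level is rich enough to realise pairwise distinct profiles across the continuum.
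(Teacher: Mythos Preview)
There are two concrete inconsistencies in your plan. First, you require both $\bigcap_n\Gamma_n=\{1\}$ and that a fixed non-trivial $\gamma_0\in C$ lies in every $\Gamma_n$; these two conditions are mutually exclusive. Even if you relax the first to the weaker statement actually needed for topological freeness (Lemma~\ref{lem.URSprofinitetrivial}), your diagonal argument over all pairs $(\gamma,g)\in(\Gamma\setminus\{1\})\times\Gamma$ still fails at $(\gamma_0,1)$: you cannot arrange $g^{-1}\gamma g=\gamma_0\notin\Gamma_m$ while keeping $\gamma_0$ in every $\Gamma_m$, so some pairs must be exempted and the ``routine bookkeeping'' you invoke is exactly where the real work lies. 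Second, your mechanism for topological non-isomorphism via ``pairwise distinct URS's'' cannot work: an allosteric action is by definition topologically free, so by Lemma~\ref{lem.URStrivial} its stabilizer URS is always $\{\{1\}\}$; all the URS's you produce are identical. In the paper, topological non-isomorphism is deduced from measurable non-isomorphism together with the \emph{unique ergodicity} of ergodic profinite actions (Lemma~\ref{lem.ergodicminimal}): any conjugating homeomorphism would have to send the unique invariant measure to the unique invariant measure.

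The paper's route avoids both issues and is structurally different. It never keeps a single element in all $\Gamma_n$; the chain genuinely satisfies $\bigcap_n\Gamma_n=\{1\}$. Non-essential freeness is obtained instead by forcing the whole free factor $\Gamma_A$ to fix a prescribed proportion of cosets. The key construction (Theorem~\ref{thm.sousgroupe}) takes a cyclic cover that replicates the $\Gamma_A$-handles, \emph{erases} a chosen fraction of them to pass to a quotient surface group, and then uses that orientable surface groups are residually finite $p$-groups (Theorem~\ref{thm.residuellementp}; LERF is not used) to descend further. This produces, for any prime $p$ and any $r\in(0,1)\cap\Z[1/p]$, a subgroup of $p$-power index in which exactly an $r$-fraction of cosets are fixed by all of $\Gamma_A$. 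Choosing one such subgroup $\Lambda_n$ for each prime in a sequence of distinct primes with $\prod_n r_n=t$, the chain $\Gamma_n=\Lambda_1\cap\cdots\cap\Lambda_n$ has pairwise coprime indices, and a Chinese-remainder argument (Lemma~\ref{lem.chinois}) makes the fixed-coset proportion exactly multiplicative, hence equal to $t$ in the limit. The continuum is therefore indexed by the real parameter $t\in(0,1)$, and the actions are separated by the single numerical invariant $\mu_t\bigl(\{x:\Gamma_A\leq\Stab_{\alpha^t}(x)\}\bigr)=t$, which simultaneously distinguishes the IRS's.
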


Moreover, we prove that the IRS's given by the non-isomorphic allosteric actions that we construct are pairwise distinct. We refer to Theorems \ref{thm.mainraffinerorientable} and \ref{thm.mainraffinernonorientable} for a precise statement of our results. Let us mention that surface groups are known to have a large "zoo" of IRS's. For instance, Bowen, Grigorchuk and Kravchenko proved in \cite{BowenGrigorchukKravchenko} that any non elementary Gromov hyperbolic group admits a continuum of IRS's which are weakly mixing when considered as dynamical systems on $\Sub(\Gamma)$. In an upcoming work (personal communication), Carderi, Le Maître and Gaboriau prove that non-amenable surface groups admit a continuum of IRS's whose support coincides with the \emph{\textbf{perfect kernel}} of $\Gamma$, i.e., the largest closed subset without isolated points in $\Sub(\Gamma)$. However, our IRS's are drastically different from the latter ones: we show that they are not weakly mixing, and that their support is strictly smaller than the perfect kernel, see Remarks \ref{rmq.totipotent} and \ref{rmq.weaklymixing}.

We develop in Section \ref{sec.pre} the preliminary results needed about profinite actions and allosteric actions. In particular, we prove that allostery is invariant under commensurability. In order to build ergodic profinite allosteric actions of non-amenable surface groups, we rely on a residual property of non-amenable surface groups in order to prove in Section \ref{sec.finiteindex} that they admit special kinds of finite index subgroups. The proof of Theorem \ref{thm.main} is completed in Section \ref{sec.proof}.

\paragraph{Acknowledgments.} I would like to thank A. Le Boudec for various conversations related to this article, as well as T. Nagnibeda for letting me know that Vershik's work implies that the group of finitely supported permutations of the integers is not allosteric. I wish to thank particularly D. Gaboriau for his constant encouragement and support, as well as for his numerous remarks on this article.


\section{Preliminaries}\label{sec.pre}

\subsection{Topological dynamic and URS/IRS}

Let $C$ be a compact Hausdorff space, and let $\alpha$ be an action by homeomorphisms of a countable discrete group $\Gamma$ on $C$. The action $\alpha$ is \emph{\textbf{minimal}} if the orbit of every $x\in C$ is dense. Recall that $\alpha$ is topologically free if for every non-trivial element $\gamma\in\Gamma$, the closed set
\[\Fix_\alpha(\gamma):=\{x\in C\mid \alpha(\gamma)x=x\}\]
has empty interior. Since $C$ is a Baire space, this is equivalent to saying that the set $\{x\in C\mid \Stab_\alpha(x)\neq \{1\}\}$ is meager, i.e., a countable union of nowhere dense sets.

The set $\Sub(\Gamma)$ of subgroups of $\Gamma$ naturally identifies with a subset of $\{0,1\}^\Gamma$. It is closed for the product topology. Thus the induced topology on $\Sub(\Gamma)$ turns it into a compact totally disconnected space, on which $\Gamma$ acts continuously by conjugation. A \emph{\textbf{URS}} of $\Gamma$ is a closed minimal $\Gamma$-invariant subset of $\Sub(\Gamma)$. The \emph{\textbf{trivial URS}} is the URS that only contains the trivial subgroup. Recall that the stabilizer URS of a minimal action $\alpha$ of $\Gamma$ on $C$ is the unique closed, $\Gamma$-invariant minimal subset in the closure of $\{\Stab_\alpha(x)\mid x\in C\}$. If $C_0\subset C$ denotes the locus of continuity of $\Stab_\alpha : C\to \Sub(\Gamma)$, then one can prove that $\URS(\alpha)$ is equal to the closure of the set $\{\Stab_\alpha(x)\mid x\in C_0\}$, see \cite{GlasnerWeiss}. 

A proof of the following classical result can be found in \cite[Prop. 2.7]{LeBoudecMatteBon}.

\begin{lem}\label{lem.URStrivial}Let $\alpha$ be a minimal $\Gamma$-action on a compact Hausdorff space $C$. Then $\alpha$ is topologically free if and only if its stabilizer URS is trivial, if and only if there exists $x\in C$ such that $\Stab_\alpha(x)$ is trivial.
\end{lem}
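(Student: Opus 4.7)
The plan is to prove the three-way equivalence by the cycle
\[\text{topological freeness} \;\Rightarrow\; \exists\, x_0,\ \Stab_\alpha(x_0)=\{1\} \;\Rightarrow\; \URS(\alpha)=\{\{1\}\} \;\Rightarrow\; \text{topological freeness}.\]

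The first implication is a direct Baire category argument: $C$ is compact Hausdorff and hence Baire, each $\Fix_\alpha(\gamma)$ for $\gamma \neq 1$ is closed with empty interior by assumption, so the countable union $\bigcup_{\gamma \neq 1}\Fix_\alpha(\gamma)$ is meager, and its complement consists of points with trivial stabilizer. The second implication relies on the observation that the singleton $\{\{1\}\}\subseteq\Sub(\Gamma)$ is a closed, $\Gamma$-invariant minimal subset, because the trivial subgroup is fixed by conjugation. Given $x_0$ with $\Stab_\alpha(x_0)=\{1\}$, the point $\{1\}$ lies in the closure $\overline{\{\Stab_\alpha(x):x\in C\}}$, so by the Glasner--Weiss uniqueness statement recalled in the introduction, $\URS(\alpha)$ must coincide with this minimal set $\{\{1\}\}$.

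The third and main implication I would handle by contrapositive. Assuming some $\Fix_\alpha(\gamma_0)$ with $\gamma_0 \neq 1$ has nonempty interior $U$, minimality of $\alpha$ ensures that the translates $\{\eta U : \eta \in \Gamma\}$ cover $C$, and compactness of $C$ extracts a finite subcover $C = \bigcup_{i=1}^n \eta_i U$. Any $x \in C$ can then be written $x = \eta_i y$ with $y \in U \subseteq \Fix_\alpha(\gamma_0)$, which forces $\eta_i \gamma_0 \eta_i^{-1} \in \Stab_\alpha(x)$. Consequently every stabilizer, and a fortiori $\URS(\alpha)$, lies in the clopen subset
\[W := \bigcup_{i=1}^n \{H \in \Sub(\Gamma) : \eta_i \gamma_0 \eta_i^{-1} \in H\} \subseteq \Sub(\Gamma),\]
which does not contain the trivial subgroup; this contradicts $\URS(\alpha) = \{\{1\}\}$. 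The main obstacle is precisely this last step: one must convert the purely topological defect (a fixed-point set with nonempty interior) into a global clopen constraint in $\Sub(\Gamma)$ that excludes the trivial subgroup, and the compactness-plus-minimality covering trick is exactly what makes this passage possible.
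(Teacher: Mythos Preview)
Your argument is correct. The cycle of implications is sound: the Baire category step works because compact Hausdorff spaces are Baire and $\Gamma$ is countable; the second step uses exactly the uniqueness in the Glasner--Weiss result; and for the third step your covering trick is the standard way to show that failure of topological freeness forces every stabilizer into the clopen set $W$, hence the whole closure $\overline{\{\Stab_\alpha(x):x\in C\}}$ and in particular $\URS(\alpha)$ lie in $W$, which excludes $\{1\}$.

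As for comparison with the paper: there is nothing to compare against, since the paper does not supply a proof of this lemma but simply cites \cite[Prop.~2.7]{LeBoudecMatteBon} for it. Your write-up is a correct self-contained argument, and in fact follows essentially the same line as the cited reference.
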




The following lemma clarifies the relation between the stabilizer URS and the stabilizer IRS. Recall that the support of a Borel probability measure is the intersection of all closed subsets of full measure. 

\begin{lem}\label{lem.essfreeimpliestopfree} 
Let $\alpha$ be a minimal $\Gamma$-action on a compact Hausdorff space $C$ and $\mu$ be a $\Gamma$-invariant Borel probability measure on $C$. Then $\URS(\alpha)$ is contained in the support of $\IRS(\alpha)$. In particular, if $\IRS(\alpha)$ is trivial, then $\URS(\alpha)$ is trivial.
\end{lem}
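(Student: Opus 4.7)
Write $K := \mathrm{supp}(\IRS(\alpha))$. Since $\IRS(\alpha) = (\Stab_\alpha)_*\mu$ is $\Gamma$-invariant, $K$ is a closed $\Gamma$-invariant subset of $\Sub(\Gamma)$. I want to show $\URS(\alpha) \subseteq K$, and I will do it pointwise: given $H \in \URS(\alpha)$ and an open neighborhood $U$ of $H$ in $\Sub(\Gamma)$, I will prove $\IRS(\alpha)(U) > 0$.

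The first key observation is that $\mathrm{supp}(\mu) = C$. Indeed, $\mathrm{supp}(\mu)$ is a closed $\Gamma$-invariant non-empty subset of $C$, so minimality of $\alpha$ forces it to equal $C$. The second ingredient is the description recalled from \cite{GlasnerWeiss}: if $C_0 \subseteq C$ denotes the locus of continuity of the Borel map $\Stab_\alpha : C \to \Sub(\Gamma)$, then
\[
\URS(\alpha) = \overline{\{\Stab_\alpha(x) \mid x \in C_0\}}.
\]

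Now, given $H \in \URS(\alpha)$ and an open neighborhood $U$ of $H$, the displayed equality above yields some $x \in C_0$ with $\Stab_\alpha(x) \in U$. Continuity of $\Stab_\alpha$ at $x$ produces an open neighborhood $V \ni x$ in $C$ such that $\Stab_\alpha(V) \subseteq U$. Since $\mathrm{supp}(\mu) = C$ and $V$ is a non-empty open set, $\mu(V) > 0$, hence
\[
\IRS(\alpha)(U) = \mu(\Stab_\alpha^{-1}(U)) \geq \mu(V) > 0,
\]
which shows $H \in K$. This proves $\URS(\alpha) \subseteq \mathrm{supp}(\IRS(\alpha))$.

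For the "in particular" part, if $\IRS(\alpha)$ is trivial, then its support is $\{\{1\}\}$, and the inclusion just proved gives $\URS(\alpha) \subseteq \{\{1\}\}$. Since $\URS(\alpha)$ is non-empty (Glasner--Weiss), it must equal $\{\{1\}\}$, i.e., it is trivial. I do not foresee a serious obstacle here; the only point that requires a little care is the appeal to the continuity-locus description of $\URS(\alpha)$, which makes the "transfer" from topological to measurable information essentially automatic once one notices that $\mu$ has full support by minimality.
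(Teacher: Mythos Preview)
Your proof is correct and follows essentially the same strategy as the paper's: both hinge on the observation that minimality forces $\mu$ to have full support on $C$, combined with the Glasner--Weiss description $\URS(\alpha)=\overline{\Stab_\alpha(C_0)}$ via the continuity locus $C_0$. The only cosmetic difference is that the paper argues contrapositively---taking an arbitrary closed set $F\subset\Sub(\Gamma)$ of full $\IRS(\alpha)$-measure and showing $\Stab_\alpha(x)\in F$ for every $x\in C_0$---whereas you work directly with open neighborhoods of a point $H\in\URS(\alpha)$; these are dual formulations of the same idea.
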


\begin{proof} Let $F$ be a closed subset of $\Sub(\Gamma)$ such that $\mu(\Stab_{\alpha}^{-1}(F))=1$. By minimality of $\alpha$, every non-empty open subset $U$ of $C$ satisfies $\mu(U)>0$. Thus, $\Stab_\alpha^{-1}(F)$ is dense in $C$. Let $x\in C$ be a continuity point of $\Stab_\alpha$. Let $(x_n)_{n\geq 0}$ be a sequence of elements in $\Stab_\alpha^{-1}(F)$ that converges to $x$. Then $\Stab_\alpha(x)\in F$, and we thus obtain that $\URS(\alpha)\subset F$. By definition of the support of $\IRS(\alpha)$, this implies that $\URS(\alpha)\subset\supp(\IRS(\alpha))$. 
\end{proof}

The following proposition gives a partial converse to Lemma \ref{lem.essfreeimpliestopfree}.

\begin{prop}\label{prop.atomless} Let $\alpha$ be a minimal $\Gamma$-action on a compact Hausdorff space $C$, and $\mu$ be a $\Gamma$-invariant Borel probability measure on $C$. If $\URS(\alpha)$ is trivial, then $\IRS(\alpha)$ is either trivial or atomless.\end{prop}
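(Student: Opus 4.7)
The plan is to suppose that $\IRS(\alpha)$ has an atom and to prove the atom must be at the trivial subgroup, after which ergodicity (implicit since the proposition is used for ergodic measures in the rest of the paper) collapses $\IRS(\alpha)$ to the trivial IRS.

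First I would pick $H \leq \Gamma$ with $m := \IRS(\alpha)(\{H\}) > 0$ and exploit conjugation-invariance of $\IRS(\alpha)$: every conjugate of $H$ carries mass $m$, so the conjugacy class $\{H_1, \dots, H_k\}$ of $H$ must be finite. Then I would introduce the set
\[
F \;:=\; \bigcup_{i=1}^{k} \bigcap_{h \in H_i} \Fix_\alpha(h),
\]
which is closed (finite union of intersections of closed fixed-point sets), $\Gamma$-invariant (since $\Gamma$ permutes the $H_i$ by conjugation), and of $\mu$-mass at least $km > 0$ since it contains $\Stab_\alpha^{-1}(\{H_1,\dots,H_k\})$.

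To rule out $H \neq \{1\}$, I would note that each $\bigcap_{h \in H_i}\Fix_\alpha(h)$ sits inside some $\Fix_\alpha(h)$ with $h \neq 1$; by Lemma \ref{lem.URStrivial}, triviality of $\URS(\alpha)$ is equivalent to topological freeness, so each such $\Fix_\alpha(h)$ has empty interior, and a finite union of closed empty-interior sets again has empty interior. Then $F$ would be a proper, closed, $\Gamma$-invariant subset of $C$, and minimality of $\alpha$ would force $F = \emptyset$, contradicting $\mu(F) > 0$. Hence the only possible atom of $\IRS(\alpha)$ is $\{1\}$ itself. To finish, I would use that $\Stab_\alpha$ is $\Gamma$-equivariant so $\IRS(\alpha) = (\Stab_\alpha)_*\mu$ is an ergodic $\Gamma$-invariant measure on $\Sub(\Gamma)$; an ergodic measure possessing an atom is supported on the single conjugation orbit of that atom, and here that orbit is the singleton $\{\{1\}\}$, which forces $\IRS(\alpha) = \delta_{\{1\}}$.

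The main obstacle will be the first half: distilling the atomic data into a closed $\Gamma$-invariant set of empty interior that minimality can kill. The closedness, invariance, and positive $\mu$-mass of $F$ are formal, but assembling finiteness of the conjugacy class (from conjugation-invariance plus the total mass bound), empty interior of individual fixed-point sets (from Lemma \ref{lem.URStrivial} and topological freeness), and stability of empty interior under finite unions into the right proper closed invariant subset is the real content. The ergodicity step at the end is then a short bookkeeping argument promoting the "only atom at $\{1\}$" conclusion to the stated dichotomy.
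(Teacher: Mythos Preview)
Your argument is correct and takes a genuinely different route from the paper. The paper works entirely inside $\Sub(\Gamma)$: once the conjugacy class $\{\gamma\Lambda\gamma^{-1}:\gamma\in\Gamma\}$ of an atom $\Lambda\neq\{1\}$ is seen to be finite, it notes that this finite set is a closed, $\Gamma$-invariant, minimal subset of the closure of $\{\Stab_\alpha(x):x\in C\}$; by the Glasner--Weiss uniqueness of the stabilizer URS, it must then \emph{equal} $\URS(\alpha)$, which is therefore non-trivial. You instead pull the problem back to $C$: your set $F$ is a closed $\Gamma$-invariant subset of $C$ which (via topological freeness and the stability of empty interior under finite unions of closed sets) has empty interior yet positive $\mu$-mass, so minimality kills it. Your approach is a bit longer but more self-contained, using only Lemma~\ref{lem.URStrivial} and minimality on $C$; the paper's is a two-line appeal to the uniqueness characterization of $\URS(\alpha)$. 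You also make explicit the ergodicity needed to promote ``the only possible atom is $\{1\}$'' to ``$\IRS(\alpha)=\delta_{\{1\}}$'', a step the paper's proof leaves tacit by starting directly from a non-trivial atom.
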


\begin{proof} Assume that $\IRS(\alpha)$ has a non-trivial atom $\{\Lambda\}$. By invariance, the atoms $\{\gamma\Lambda\gamma^{-1}\}$ have equal measure for all $\gamma\in\Gamma$. Thus, $\Lambda$ has only finitely many conjugates. Thus, the closure in $\Sub(\Gamma)$ of the set $\{\Stab_\alpha(x)\mid x\in C\}$ contains the finite set $\{\gamma\Lambda\gamma^{-1}\mid\gamma\in\Gamma\}$, which is closed, $\Gamma$-invariant and minimal. Thus, $\URS(\alpha)$ is non-trivial.
\end{proof}

This last result implies that the converse of Lemma \ref{lem.essfreeimpliestopfree} is actually true for groups admitting only countably many subgroups.
\begin{cor}\label{cor.SubCountable} Let $\alpha$ be a minimal $\Gamma$-action on a compact Hausdorff space and $\mu$ a $\Gamma$-invariant Borel probability measure on $C$. If $\Sub(\Gamma)$ is countable, then $\IRS(\alpha)$ is trivial iff $\URS(\alpha)$ is trivial.
\end{cor}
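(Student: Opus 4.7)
The plan is to deduce this as an immediate consequence of the two preceding results, Lemma \ref{lem.essfreeimpliestopfree} and Proposition \ref{prop.atomless}, together with the basic observation that a countable space carries no atomless probability measure.

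The forward implication, namely that $\IRS(\alpha)$ trivial implies $\URS(\alpha)$ trivial, is exactly the content of Lemma \ref{lem.essfreeimpliestopfree} (no countability hypothesis is used here). So I would dispatch it in one line.

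For the converse, I would assume $\URS(\alpha)$ is trivial and apply Proposition \ref{prop.atomless} to conclude that $\IRS(\alpha)$ is either trivial or atomless. Now I would invoke the assumption that $\Sub(\Gamma)$ is countable: any Borel probability measure on a countable standard Borel space is purely atomic, since $\IRS(\alpha)(\Sub(\Gamma)) = \sum_{H\in\Sub(\Gamma)} \IRS(\alpha)(\{H\})$, and this sum can equal $1$ only if at least one singleton has positive measure. Consequently the atomless alternative is vacuous, forcing $\IRS(\alpha)$ to be trivial.

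There is no real obstacle here: the proof is essentially a two-line combination of already-established facts. The only thing worth stating explicitly is the elementary measure-theoretic fact that a countable space admits no non-zero atomless Borel probability measure, which reduces the dichotomy of Proposition \ref{prop.atomless} to the desired conclusion.
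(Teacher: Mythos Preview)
Your proposal is correct and matches the paper's approach exactly: the corollary is stated without proof as an immediate consequence of Proposition~\ref{prop.atomless} (together with Lemma~\ref{lem.essfreeimpliestopfree} for the forward direction), and your argument makes explicit precisely the trivial measure-theoretic step the paper leaves to the reader.
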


Thus, groups $\Gamma$ such that $\Sub(\Gamma)$ is countable are not allosteric.


\subsection{Profinite actions and their URS/IRS}\label{sec.profinite}

Let $\Gamma$ be a countable group. For every $n\geq 0$, let $\alpha_n$ be a $\Gamma$-action on a finite set $X_n$, and assume that for every $n\geq 0$, $\alpha_{n}$ is a quotient of $\alpha_{n+1}$, i.e., there exists a $\Gamma$-equivariant onto map $q_n : X_{n+1}\twoheadrightarrow X_n$. The inverse limit of the finite spaces $X_n$ is the space
\[\varprojlim X_n:=\left\{(x_n)_{n\geq 0}\in \prod_{n\geq 0}X_n\mid \forall n\geq 0, q_n(x_{n+1})=x_n\right\}.\]
This space is closed, thus compact, and totally disconnected in the product topology. Let $\alpha$ be the $\Gamma$-action by homeomorphisms on $\varprojlim X_n$ defined by
\[\alpha(\gamma)(x_n)_{n\geq 0}:=(\alpha_n(\gamma)x_n)_{n\geq 0}.\]
If each $X_n$ is endowed with a $\Gamma$-invariant probability measure $\mu_n$, we let $\mu$ be the unique Borel probability measure on $\varprojlim X_n$ that projects onto $\mu_k$ via the canonical projection $\pi_k : \varprojlim X_n\to X_k$, for every $k\geq 0$. The $\Gamma$-action $\alpha$ preserves $\mu$, and is called the \emph{\textbf{inverse limit}} of the \pmp{} $\Gamma$-actions $\alpha_n$. A \pmp{} action of $\Gamma$ is \emph{\textbf{profinite}} if it is measurably isomorphic to an inverse limit of \pmp{} $\Gamma$-actions on finite sets. A proof of the following Lemma can be found in \cite[Prop. 4.1]{Grigorchuk}.
\begin{lem} The following are equivalent:
\begin{enumerate}\label{lem.ergodicminimal}
\item For every $n\geq 0, \alpha_n$ is transitive, and $\mu_n$ is the uniform measure on $X_n$.
\item The action $\alpha$ is minimal.
\item The action $\alpha$ is $\mu$-ergodic.
\item The action $\alpha$ is uniquely ergodic, i.e., $\mu$ is the unique $\Gamma$-invariant Borel probability measure on $\varprojlim X_n$.
\end{enumerate}
\end{lem}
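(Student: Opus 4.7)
The plan is to establish the four equivalences via the scheme $(1) \Leftrightarrow (2)$, $(1) \Rightarrow (4) \Rightarrow (3) \Rightarrow (1)$. The underlying toolkit is that the projection $\pi_n : \varprojlim X_k \to X_n$ is a continuous, $\Gamma$-equivariant, surjective factor map (surjectivity coming from the surjectivity of the bonding maps $q_k$), and that the cylinders $\pi_n^{-1}(\{y\})$, as $n \geq 0$ and $y \in X_n$ vary, form a countable basis of clopen sets generating the Borel $\sigma$-algebra of the inverse limit.

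For $(1) \Rightarrow (2)$, given two points $x = (x_n)$ and $y = (y_n)$ in $\varprojlim X_n$, I would use transitivity of $\alpha_n$ to pick $\gamma_n \in \Gamma$ with $\alpha_n(\gamma_n) x_n = y_n$; equivariance of each $q_k$ for $k \leq n$ then forces $\alpha_k(\gamma_n) x_k = y_k$ as well, so $\alpha(\gamma_n) x \to y$ in the product topology. Conversely, $(2) \Rightarrow (1)$ follows because surjectivity of $\pi_n$ together with minimality of $\alpha$ makes $X_n$ a single $\alpha_n$-orbit, after which the $\Gamma$-invariance of $\mu_n$ on a transitive finite action forces it to be uniform. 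For $(1) \Rightarrow (4)$, given any $\Gamma$-invariant Borel probability measure $\nu$ on $\varprojlim X_n$, its pushforward $(\pi_n)_*\nu$ is $\Gamma$-invariant on the transitive finite set $X_n$ and hence uniform, i.e.\ equal to $\mu_n = (\pi_n)_*\mu$; since $\nu$ and $\mu$ then agree on the $\pi$-system of cylinders, the $\pi$-$\lambda$ lemma gives $\nu = \mu$.

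The implication $(4) \Rightarrow (3)$ is standard: ergodic invariant measures are the extreme points of the simplex of invariant probability measures, so if there is only one such measure it is automatically ergodic. Finally, for $(3) \Rightarrow (1)$, the pushforward $\mu_n = (\pi_n)_*\mu$ inherits $\alpha_n$-ergodicity from the ergodicity of $\mu$, and on a finite set an ergodic $\Gamma$-invariant probability measure is uniform on a single $\Gamma$-orbit; replacing $X_n$ by $\supp(\mu_n)$ if needed (this does not change the inverse limit), this orbit equals $X_n$, which recovers (1). The main delicate point I anticipate is the cylinder-generation argument in $(1) \Rightarrow (4)$: one has to be careful that the cylinders genuinely generate the Borel structure of the inverse limit and then apply a $\pi$-$\lambda$ style argument to promote finite-level uniqueness to global uniqueness of the invariant measure, which is where the finite-to-infinite transition is really being made.
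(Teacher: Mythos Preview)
The paper does not give its own proof of this lemma; it simply refers the reader to \cite[Prop.~4.1]{Grigorchuk}. So there is no in-paper argument to compare against, and your proof has to be assessed on its own merits.

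Your arguments for $(1)\Leftrightarrow(2)$, $(1)\Rightarrow(4)$ and $(4)\Rightarrow(3)$ are correct and are the standard ones. The only real issue is the step $(3)\Rightarrow(1)$, which you yourself flag. As the lemma is literally stated (with $\mu_n$ an arbitrary $\Gamma$-invariant probability measure on $X_n$), this implication is in fact false: take $\Gamma=\Z$, $X_n=\Z/2^n\Z\sqcup\{*\}$ with $\Z$ acting by translation on the first summand and fixing $*$, with the obvious surjective bonding maps, and let $\mu_n$ be uniform on $\Z/2^n\Z$ and zero on $*$. Then $\varprojlim X_n$ is the $2$-adic odometer with an extra fixed point, $\mu$ is Haar on the odometer factor, the action is $\mu$-ergodic, yet no $\alpha_n$ is transitive. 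Your proposed fix ``replacing $X_n$ by $\supp(\mu_n)$ (this does not change the inverse limit)'' is not quite right either: it \emph{does} change the inverse limit as a topological space (it deletes the point $*$ in the example), though not as a measure space, and in any case it does not establish (1) for the original $X_n$. The defect is really in the statement rather than in your reasoning; the lemma is correct once one assumes each $\mu_n$ has full support, which is the implicit convention in the reference. If you want a clean cycle without that extra hypothesis, replace $(3)\Rightarrow(1)$ by a direct proof of $(4)\Rightarrow(1)$: if some $X_n$ had two orbits, choosing compatible orbits above each would yield two distinct invariant measures on the inverse limit, contradicting unique ergodicity. Then prove $(3)\Rightarrow(1)$ only under the full-support assumption.
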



With the above notations, the following lemma is useful to compute the measure of a closed subset in an inverse limit (here, no group action is involved).

\begin{lem}\label{lem.closed} Let $A$ be a closed subset of $\varprojlim X_n$. Then $A=\bigcap_{n\geq 0}\pi_n^{-1}(\pi_n(A))$. Thus
\[\mu(A)=\underset{n\to +\infty}{\lim}\mu_n(\pi_n(A)).\]
\end{lem}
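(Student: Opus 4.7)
The plan is to prove the set equality first and then deduce the measure formula by continuity of measure on decreasing sequences of closed sets.

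For the set equality $A=\bigcap_{n\geq 0}\pi_n^{-1}(\pi_n(A))$, the inclusion $\subseteq$ is immediate from the definition of preimage. For the reverse inclusion, I would take $x=(x_n)_{n\geq 0}$ in the intersection and, for each $n$, pick $y^{(n)}\in A$ with $\pi_n(y^{(n)})=x_n$. The key observation is that the compatibility condition $\pi_k = q_k\circ\cdots\circ q_{n-1}\circ\pi_n$ forces $\pi_k(y^{(n)})=x_k$ for every $k\leq n$. Hence $y^{(n)}$ agrees with $x$ on its first $n+1$ coordinates, which means $y^{(n)}\to x$ in the product topology. Since $A$ is closed in $\varprojlim X_n$, we conclude $x\in A$.

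Next I would observe that the sequence of closed sets $F_n:=\pi_n^{-1}(\pi_n(A))$ is decreasing. Indeed, if $x\in F_{n+1}$, pick $y\in A$ with $\pi_{n+1}(y)=\pi_{n+1}(x)$; applying $q_n$ yields $\pi_n(y)=\pi_n(x)$, so $x\in F_n$. Combined with the set equality above, this writes $A$ as a decreasing intersection of closed (hence Borel) sets of finite measure.

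By continuity of the probability measure $\mu$ along decreasing sequences of measurable sets,
\[\mu(A)=\lim_{n\to+\infty}\mu(F_n)=\lim_{n\to+\infty}\mu\bigl(\pi_n^{-1}(\pi_n(A))\bigr).\]
Finally, by the defining property of $\mu$ as the measure whose pushforward under $\pi_n$ is $\mu_n$, we have $\mu(\pi_n^{-1}(B))=\mu_n(B)$ for every Borel $B\subset X_n$, and applying this with $B=\pi_n(A)$ (a finite, hence Borel, subset of $X_n$) yields the desired formula.

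The only step that requires any care is the reverse inclusion in the set equality, where one must use both the closedness of $A$ and the compatibility of the projections $q_n$ to build the approximating sequence; the rest is routine once the intersection is known to be decreasing.
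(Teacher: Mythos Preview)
Your proof is correct and follows essentially the same approach as the paper: pick $y^{(n)}\in A$ with $\pi_n(y^{(n)})=\pi_n(x)$ and use the topology to conclude $x\in A$, then deduce the measure statement by continuity from above. The only difference is that you show directly that $y^{(n)}\to x$ via the compatibility relations $\pi_k=q_k\circ\cdots\circ q_{n-1}\circ\pi_n$, whereas the paper extracts a convergent subsequence by compactness of $A$ and then identifies its limit with $x$; your version is slightly more economical, needing only closedness of $A$.
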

\begin{proof} First, $A$ is contained in $\cap_{n\geq 0}\pi_n^{-1}(\pi_n(A))$ since it is contained in each $\pi_n^{-1}(\pi_n(A))$. Conversely, let $x$ be in $\cap_{n\geq 0}\pi_n^{-1}(\pi_n(A))$. For every $n\geq 0$, there exists $y_n\in A$ such that $\pi_n(x)=\pi_n(y_n)$. By compactness of $A$, let $y\in A$ be a limit of some subsequence of $(y_n)_{n\geq 0}$. By definition of the product topology, for every $n\geq 0, \pi_n(x)=\pi_n(y)$, thus $x=y$ and $x$ belongs to $A$.
\end{proof}
Let $(\Gamma_n)_{n\geq 0}$ be a \emph{\textbf{chain}} in $\Gamma$, that is an infinite decreasing sequence $\Gamma=\Gamma_0\geq \Gamma_1\geq \dots$ of finite index subgroups. If $X_n=\Gamma/\Gamma_n$ and $\mu_n$ is the uniform probability measure on $X_n$, then we get a profinite action that is ergodic by Lemma \ref{lem.ergodicminimal}. Conversely, any ergodic (equivalently minimal) profinite $\Gamma$-action $\Gamma\curvearrowright\varprojlim X_n$ is measurably isomorphic to a profinite action of the form $\Gamma\curvearrowright\varprojlim \Gamma/\Gamma_n$ for some chain $(\Gamma_n)_{n\geq 0}$, by fixing a point $x\in\varprojlim X_n$, and letting $\Gamma_n$ be the stabilizer of $\pi_n(x)\in X_n$.


\begin{lem}\label{lem.URSprofinitetrivial} Let $(\Gamma_n)_{n\geq 0}$ be a chain in $\Gamma$, and let $\alpha$ be the corresponding ergodic profinite $\Gamma$-action. Then $\URS(\alpha)$ is trivial if and only if there exists a sequence $(\gamma_n)_{n\geq 0}$ of elements in $\Gamma$ such that
\[\bigcap_{n\geq 0}\gamma_n\Gamma_n\gamma_n^{-1}=\{1\}.\]
\end{lem}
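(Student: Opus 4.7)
The plan is to apply Lemma \ref{lem.URStrivial}, which characterizes triviality of $\URS(\alpha)$ by the existence of a point $x \in \varprojlim\Gamma/\Gamma_n$ with $\Stab_\alpha(x) = \{1\}$. Since an element of $\varprojlim\Gamma/\Gamma_n$ is precisely a \emph{compatible} sequence of cosets $x = (g_n\Gamma_n)_n$ (i.e., with $g_{n+1}\Gamma_n = g_n\Gamma_n$ for every $n$), and since its stabilizer is
\[
\Stab_\alpha(x) = \bigcap_{n\geq 0} \Stab_{\alpha_n}(g_n\Gamma_n) = \bigcap_{n\geq 0} g_n\Gamma_n g_n^{-1},
\]
the lemma reduces to showing that the existence of \emph{any} sequence $(\gamma_n)$ with trivial intersection of conjugates is equivalent to the existence of a \emph{compatible} such sequence.

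The direction $(\Rightarrow)$ is immediate: a free point $x=(\gamma_n\Gamma_n)_n$ given by Lemma \ref{lem.URStrivial} directly supplies the required sequence $(\gamma_n)$. For the converse $(\Leftarrow)$, my plan is a compactness plus diagonal-extraction argument. For each $n$, the sequence $y^{(n)}:=(\gamma_n\Gamma_m)_{m\geq 0}$ is a valid element of $\varprojlim\Gamma/\Gamma_n$ because $\gamma_n\Gamma_{m+1}\subseteq\gamma_n\Gamma_m$, and crucially $y^{(n)}$ passes through $\gamma_n\Gamma_n$ at level $n$. Since each $\Gamma/\Gamma_m$ is finite, a diagonal extraction together with compactness of $\varprojlim\Gamma/\Gamma_n$ produces a subsequence $(n_k)$ along which $y^{(n_k)}$ converges to a point $y=(\delta_m\Gamma_m)_m\in\varprojlim\Gamma/\Gamma_n$, where $\delta_m\Gamma_m$ is the eventual value of $\gamma_{n_k}\Gamma_m$.

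The main obstacle is then to verify that $\Stab_\alpha(y)=\{1\}$. The naive extraction only gives $\Stab_\alpha(y) \subseteq \bigcap_m \gamma_{n_{k(m)}}\Gamma_m\gamma_{n_{k(m)}}^{-1}$ for suitable $k(m)\to\infty$, but these conjugates are of $\Gamma_m$ rather than of the smaller $\Gamma_{n_{k(m)}}$ appearing in the hypothesis. To bridge this, I would refine the extraction via pigeonhole in each finite quotient $\Gamma/\Gamma_{n_k}$ so as to enforce the compatibility relation $\gamma_{n_{k+1}}\equiv\gamma_{n_k}\pmod{\Gamma_{n_k}}$ for all $k$; this forces $y_{n_k}=\gamma_{n_k}\Gamma_{n_k}$ for every $k$, and hence
\[
\Stab_\alpha(y) \subseteq \bigcap_{k\geq 0}\gamma_{n_k}\Gamma_{n_k}\gamma_{n_k}^{-1}.
\]
Weaving into the extraction an enumeration of $\Gamma\setminus\{1\}$, so that for every non-identity element $g\in\Gamma$ some index of the subsequence witnesses $g \notin \gamma_{n_k}\Gamma_{n_k}\gamma_{n_k}^{-1}$ (as provided by the hypothesis), reduces the right-hand side to $\{1\}$ and concludes that $\Stab_\alpha(y)=\{1\}$, as required.
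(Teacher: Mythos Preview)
Your core observation—that a point $x=(\gamma_n\Gamma_n)_{n\ge 0}$ of $\varprojlim\Gamma/\Gamma_n$ has $\Stab_\alpha(x)=\bigcap_{n}\gamma_n\Gamma_n\gamma_n^{-1}$, so that Lemma~\ref{lem.URStrivial} yields the equivalence—is exactly the paper's proof. The paper stops there: it reads ``a sequence $(\gamma_n)$'' as the sequence of coset representatives of a point of the inverse limit, so compatibility is built in and both implications are immediate.

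The additional reduction you attempt, from an \emph{arbitrary} sequence to a compatible one, goes beyond what the paper argues, and your sketch for it has a gap. Pigeonhole in $\Gamma/\Gamma_{n_k}$ only guarantees that \emph{some} coset contains infinitely many of the remaining $\gamma_n$; it does not let you force that coset to be $\gamma_{n_k}\Gamma_{n_k}$, so the relation $\gamma_{n_{k+1}}\equiv\gamma_{n_k}\pmod{\Gamma_{n_k}}$ cannot in general be maintained while keeping an infinite tail of indices to work with. And even if compatibility were arranged, the ``weaving'' step is unjustified: the hypothesis only provides, for each $g\neq 1$, \emph{some} index $N$ with $g\notin\gamma_N\Gamma_N\gamma_N^{-1}$, and nothing prevents every such $N$ from lying outside the subsequence your extraction has already committed to. For the paper's purposes this extra generality is in any case unnecessary—the only application of the $(\Leftarrow)$ direction uses the constant sequence $\gamma_n=1$, which is trivially compatible.
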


 \begin{proof}
For all $x\in\varprojlim \Gamma/\Gamma_n$, if $x=(\gamma_n\Gamma_n)_{n\geq 0}$, then
 \[\Stab_\alpha(x)=\bigcap_{n\geq 0}\gamma_n\Gamma_n\gamma_n^{-1}.\]
Thus, the result is a direct consequence of Lemma \ref{lem.URStrivial}.
 \end{proof}

 \begin{prop}\label{prop.mixing}
 Let $(\Gamma_n)_{n\geq 0}$ be a chain in $\Gamma$, and let $\alpha$ be the corresponding ergodic profinite $\Gamma$-action. If $\URS(\alpha)$ is trivial, then either $\IRS(\alpha)$ is trivial, or there exists a finite index subgroup $\Lambda\leq\Gamma$ such that the \pmp{} $\Lambda$-action by conjugation on $(\Sub(\Gamma),\IRS(\alpha))$ is not ergodic.
 \end{prop}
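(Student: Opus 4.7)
The plan is to construct a non-trivial finite $\Gamma$-equivariant measurable factor of $(\Sub(\Gamma),\IRS(\alpha))$ whenever $\IRS(\alpha)$ is non-trivial; the stabilizer in $\Gamma$ of any point of such a factor is then a finite index subgroup $\Lambda$ whose action on that factor, and hence on $(\Sub(\Gamma),\IRS(\alpha))$ itself, fails to be ergodic. Note first that $\IRS(\alpha)$ is $\Gamma$-ergodic, being the pushforward under the $\Gamma$-equivariant Borel map $\Stab_\alpha$ of the ergodic measure $\mu$.

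For each $n\ge 0$ I would consider the Borel $\Gamma$-equivariant map
\[\Phi_n \colon \Sub(\Gamma) \longrightarrow 2^{\Gamma/\Gamma_n},\qquad \Phi_n(H) := \{\gamma\Gamma_n \in \Gamma/\Gamma_n : H \subseteq \gamma\Gamma_n\gamma^{-1}\},\]
where $\Gamma$ acts on $2^{\Gamma/\Gamma_n}$ by translation of cosets. (It is well-defined because $\gamma\Gamma_n\gamma^{-1}$ depends only on the coset $\gamma\Gamma_n$, Borel because each set $\{H : H \subseteq \gamma\Gamma_n\gamma^{-1}\}$ is closed in $\Sub(\Gamma)$, and $\Gamma$-equivariant by a direct computation.) The pushforward $(\Phi_n)_*\IRS(\alpha)$ is a $\Gamma$-invariant probability measure on the finite set $2^{\Gamma/\Gamma_n}$, so by $\Gamma$-ergodicity of $\IRS(\alpha)$ it is the uniform measure on a single $\Gamma$-orbit $\mathcal O_n \subseteq 2^{\Gamma/\Gamma_n}$.

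The core step, which I expect to be the main obstacle, is to produce some $n$ with $|\mathcal O_n| \ge 2$. Suppose for contradiction that $|\mathcal O_n| = 1$ for every $n$, so $\Phi_n$ is almost surely equal to some $\Gamma$-fixed $S_n \in 2^{\Gamma/\Gamma_n}$; since $\Gamma$ acts transitively on $\Gamma/\Gamma_n$, this forces $S_n \in \{\emptyset, \Gamma/\Gamma_n\}$. For any $x = (\gamma_n\Gamma_n)_{n\ge 0}$ in $\varprojlim \Gamma/\Gamma_n$ the identity $\Stab_\alpha(x) = \bigcap_n \gamma_n\Gamma_n\gamma_n^{-1}$ gives $\gamma_n\Gamma_n \in \Phi_n(\Stab_\alpha(x))$, so $\Phi_n$ is non-empty on $\IRS(\alpha)$-a.e.\ $H$; hence $S_n = \Gamma/\Gamma_n$, which means that $\IRS(\alpha)$-a.e.\ $H$ is contained in the normal core $N_n := \bigcap_{\gamma \in \Gamma}\gamma\Gamma_n\gamma^{-1}$. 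Intersecting over $n$, $\IRS(\alpha)$-a.e.\ $H$ lies in $K := \bigcap_n N_n$, which is precisely the kernel of the action $\alpha$ on $\varprojlim \Gamma/\Gamma_n$. Triviality of $\URS(\alpha)$ (Lemma~\ref{lem.URStrivial}) provides some $x_0$ with $\Stab_\alpha(x_0) = \{1\}$, hence $K \le \{1\}$, hence $\IRS(\alpha) = \delta_{\{1\}}$ is trivial, contradicting our hypothesis.

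Fixing such an $n$, I would then pick any $S \in \mathcal O_n$ and set $\Lambda := \Stab_\Gamma(S)$, a subgroup of finite index $|\mathcal O_n| \ge 2$. The Borel set $\Phi_n^{-1}(\{S\})$ is $\Lambda$-invariant, and uniform concentration of $(\Phi_n)_*\IRS(\alpha)$ on $\mathcal O_n$ gives it $\IRS(\alpha)$-measure $1/|\mathcal O_n| \in (0,1)$. This witnesses the failure of ergodicity of $\Lambda \curvearrowright (\Sub(\Gamma), \IRS(\alpha))$ and completes the proof.
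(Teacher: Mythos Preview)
Your argument is correct and is essentially the same as the paper's: both show that if the conjugation action on $(\Sub(\Gamma),\IRS(\alpha))$ stays ergodic under every finite index subgroup, then $\IRS(\alpha)$-a.e.\ $H$ is contained in each conjugate $\gamma_k\Gamma_k\gamma_k^{-1}$ (the paper uses the specific conjugates coming from a point with trivial stabilizer, you use the full normal cores $N_n$), whence $H=\{1\}$ by the triviality of $\URS(\alpha)$. The only difference is packaging: the paper runs the contrapositive directly with the invariant sets $\{\Stab_\alpha(x):\pi_k(x)=\gamma_k\Gamma_k\}\subseteq\Sub(\gamma_k\Gamma_k\gamma_k^{-1})$ under $\Lambda=\gamma_k\Gamma_k\gamma_k^{-1}$, whereas you introduce the equivariant factor maps $\Phi_n$ and read off $\Lambda$ as the stabilizer of a point in the finite image.
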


\begin{proof} Assume that the \pmp{} $\Gamma$-action by conjugation on $(\Sub(\Gamma),\IRS(\alpha))$ remains ergodic under any finite index subgroup of $\Gamma$.
Since $\URS(\alpha)$ is trivial, there exists by Lemma \ref{lem.URSprofinitetrivial} a sequence $(\gamma_n)_{n\geq 0}$ of elements in $\Gamma$ such that
\[\bigcap_{n\geq 0}\gamma_n\Gamma_n\gamma_n^{-1}=\{1\}.\]
For every $k\geq 0$, if $\pi_k : \varprojlim \Gamma/\Gamma_n\to\Gamma/\Gamma_k$ denotes the projection onto the $k^{\text{th}}$ coordinate, then the set
\[\{\Stab_\alpha(x)\mid x\in\varprojlim\Gamma/\Gamma_n, \pi_k(x)=\gamma_k\Gamma_k\}\subset\Sub(\Gamma)\]
has positive measure for $\IRS(\alpha)$, is contained in $\Sub(\gamma_k\Gamma_k\gamma_k^{-1})$ and is invariant under the finite index subgroup $\Stab_{\alpha_k}(\gamma_k\Gamma_k)=\gamma_k\Gamma_k\gamma_k^{-1} $. By ergodicity, it is a full measure set. Thus, for a.e. $x\in\varprojlim \Gamma/\Gamma_n$, $\Stab_\alpha(x)$ is a subgroup of $\gamma_k\Gamma_k\gamma_k^{-1}$. Since this is true for every $k\geq 0$, we conclude that $\IRS(\alpha)$ is trivial.
\end{proof}

\subsection{Allostery and commensurability}

Two groups $\Gamma_1$ and $\Gamma_2$ are \emph{\textbf{commensurable}} if there exists finite index subgroups $\Lambda_1\leq \Gamma_1$ and $\Lambda_2\leq \Gamma_2$ such that $\Lambda_1$ is isomorphic to $\Lambda_2$. In this section, we prove the following result.

\begin{thm}\label{thm.allosterycommensurability} Allostery is invariant under commensurability.
\end{thm}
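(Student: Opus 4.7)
The plan is to prove both directions of the equivalence: if $\Lambda\leq\Gamma$ has finite index, then $\Gamma$ is allosteric if and only if $\Lambda$ is. Combined with the obvious isomorphism invariance of allostery, this yields the theorem. The two directions use opposite techniques (restriction to the subgroup and induction to the supergroup); the downward direction is the more delicate one.

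For the downward direction, suppose $\alpha:\Gamma\curvearrowright(C,\mu)$ is allosteric. By replacing $\Lambda$ with its normal core and invoking the upward direction to pass back to $\Lambda$, I may assume $\Lambda\triangleleft\Gamma$. A standard argument based on minimality and finite index then decomposes $C=C_1\sqcup\cdots\sqcup C_k$ into clopen $\Lambda$-minimal components transitively permuted by $\Gamma/\Lambda$. On each $C_i$, the $\Lambda$-action is minimal and topologically free, since the fixed sets of elements of $\Lambda$ are closed in $C$ with empty interior and thus retain empty interior in the clopen $C_i$. The crux is transferring non-essential freeness: by Proposition~\ref{prop.atomless}, the IRS of $\alpha$ is atomless, so it gives zero measure to the countable set of subgroups of $\Gamma$ of order at most $[\Gamma:\Lambda]$. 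But any subgroup $H\leq\Gamma$ with $H\cap\Lambda=\{1\}$ injects into $\Gamma/\Lambda$, so $|H|\leq[\Gamma:\Lambda]$. It follows that $\Stab_\alpha(x)\cap\Lambda\neq\{1\}$ for $\mu$-a.e.\ $x$. Decomposing each $\mu|_{C_i}$ into $\Lambda$-ergodic components then produces, for almost every component, an ergodic minimal $\Lambda$-action on $C_i$ that is topologically free but not essentially free.

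For the upward direction, I coinduce an allosteric $\Lambda$-action $\beta:\Lambda\curvearrowright(D,\nu)$ to $\Gamma$ via $C=(\Gamma\times D)/\Lambda$, where $\Lambda$ acts by $\lambda\cdot(\gamma,x)=(\gamma\lambda^{-1},\lambda x)$ and $\Gamma$ by left multiplication on the first coordinate. Choosing coset representatives $\gamma_1,\ldots,\gamma_k$ identifies $C$ with $k=[\Gamma:\Lambda]$ copies of $D$, and averaging $\nu$ over these slices gives a $\Gamma$-invariant probability measure $\mu$. Ergodicity and minimality of the resulting $\Gamma$-action follow from the corresponding properties of $\beta$ by a direct slicewise computation. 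For topological freeness, a non-trivial $\gamma\in\Gamma$ either permutes slices without fixed points, in which case $\Fix(\gamma)=\emptyset$, or acts on each slice it fixes as a non-trivial conjugate of an element of $\Lambda$, whose fixed set has empty interior by topological freeness of $\beta$. Non-essential freeness follows from the fact that for any $\lambda\in\Lambda\setminus\{1\}$ with $\nu(\Fix_\beta(\lambda))>0$, the slice corresponding to the identity coset is $\lambda$-invariant and contains a $\lambda$-fixed subset of positive $\mu$-measure.

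The main obstacle is the downward direction, specifically the transfer of non-essential freeness from $\Gamma$ to $\Lambda$: a priori, the non-trivial $\Gamma$-stabilizers could all intersect $\Lambda$ trivially (for instance if they were all finite of bounded order), and ruling this out relies precisely on the atomlessness dichotomy of Proposition~\ref{prop.atomless}. Once the IRS argument shows that stabilizers generically have order exceeding $[\Gamma:\Lambda]$, the remaining verifications in both directions reduce to routine restriction and induction computations.
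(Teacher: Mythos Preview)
Your proof is correct and follows essentially the same strategy as the paper's: both split the theorem into the upward direction (Proposition~\ref{prop.allostericsousgroupe}, via the induced action) and the downward direction (Proposition~\ref{prop.allostericsurgroupe}, via reduction to the normal core and the atomlessness dichotomy of Proposition~\ref{prop.atomless}). The only notable differences are presentational: you phrase the downward step as a clopen decomposition of $C$ into $\Lambda$-minimal pieces followed by ergodic decomposition of $\mu$, whereas the paper picks a single $N$-minimal orbit closure $\mathcal{O}_N(y)$ and a single $N$-ergodic measurable subset of it; and for topological freeness of the induced action you argue slice-by-slice via $\Fix(\gamma)$, while the paper simply exhibits a point with trivial stabilizer and invokes Lemma~\ref{lem.URStrivial}.
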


We prove Theorem \ref{thm.allosterycommensurability} in two steps, by showing that allostery is inherited by finite index overgroups in Proposition \ref{prop.allostericsousgroupe} and by finite index subgroups in Proposition \ref{prop.allostericsurgroupe}. Let $\Gamma$ be a countable group and $\Lambda\leq\Gamma$ a finite index subgroup. Let $\alpha : \Lambda\curvearrowright (C,\mu)$ be an action by homeomorphisms on a compact Hausdorff space $C$ with a $\Lambda$-invariant Borel probability measure $\mu$ on $C$. The group $\Gamma$ acts on $X\times\Gamma$ trivially on the first factor and by left multiplication on the second factor. This action projects onto a $\Gamma$-action by homeomorphisms on the quotient of $X\times \Gamma$ by the $\Lambda$-action $\lambda\cdot (x,\gamma)=(\alpha(\lambda)x,\gamma\lambda)$, and the product of $\mu$ with the counting measure projects onto a $\Gamma$-invariant Borel probability measure. This action is the $\Gamma$-action \emph{\textbf{induced}} by $\alpha$.

\begin{prop}\label{prop.allostericsousgroupe}
Let $\Gamma$ be a countable group and $\Lambda\leq\Gamma$ a finite index subgroup. Then the $\Gamma$-action induced by any allosteric $\Lambda$-action is allosteric.\end{prop}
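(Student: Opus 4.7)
The plan is to unravel the induced action into a concrete model and then transfer each of the three conditions (minimality and ergodicity, topological freeness, failure of essential freeness) from $\alpha$ to the induced action.

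First, I would fix left coset representatives $\gamma_1 = 1, \gamma_2, \dots, \gamma_n$ of $\Lambda$ in $\Gamma$ and use the relation $(x,\gamma) \sim (\alpha(\lambda)x, \gamma\lambda)$ to identify $Y := (C \times \Gamma)/\Lambda$ with the disjoint union $\bigsqcup_{i=1}^n C_i$ of $n$ copies of $C$, where $C_i$ corresponds to the representative $\gamma_i$; the induced measure $\tilde{\mu}$ becomes $\frac{1}{n}\sum_i \mu_i$ with $\mu_i$ a copy of $\mu$. For $g \in \Gamma$ and each $i$, write uniquely $g\gamma_i = \gamma_{\sigma_g(i)}\lambda_{g,i}$ with $\lambda_{g,i}\in \Lambda$; the induced action $\tilde{\alpha}$ is then
\[
\tilde{\alpha}(g)(x,i) = \bigl(\alpha(\lambda_{g,i}^{-1})x, \sigma_g(i)\bigr),
\]
so $g$ permutes the clopen pieces $C_i$ according to $\sigma_g$ and acts on each via an element of the $\Lambda$-action $\alpha$.

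Minimality and ergodicity of $\tilde{\alpha}$ follow from the corresponding properties of $\alpha$ by standard arguments: $\Lambda$ stabilizes $C_1$ and acts there essentially as $\alpha$, so any $\tilde{\alpha}$-invariant measurable set meets $C_1$ in a $\Lambda$-invariant set; ergodicity of $\alpha$ forces that intersection to be null or co-null in $C_1$, and the $\Gamma$-action transitively permutes the $C_i$, yielding ergodicity of $\tilde{\alpha}$. The same transitivity plus minimality of $\alpha$ on $C_1$ gives density of every $\Gamma$-orbit.

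For topological freeness, fix $g \neq 1$ and note that $\Fix_{\tilde{\alpha}}(g) \cap C_i$ is empty whenever $\sigma_g(i) \neq i$, while if $\sigma_g(i) = i$ then $g = \gamma_i \lambda_{g,i}\gamma_i^{-1}$ with $\lambda_{g,i} = \gamma_i^{-1}g\gamma_i \neq 1$, and $\Fix_{\tilde{\alpha}}(g)\cap C_i$ equals $\Fix_\alpha(\lambda_{g,i})$. Topological freeness of $\alpha$ makes this set have empty interior in $C_i$, hence in $Y$. For the failure of essential freeness, pick a non-trivial $\lambda \in \Lambda$ with $\mu(\Fix_\alpha(\lambda)) > 0$ (which exists by non-essential-freeness of $\alpha$); since $\lambda \in \Lambda$ preserves $C_1$ via $\alpha(\lambda^{-1})$, the intersection $\Fix_{\tilde{\alpha}}(\lambda) \cap C_1$ equals $\Fix_\alpha(\lambda)$ up to the identification, so it has positive $\tilde{\mu}$-measure.

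The only real obstacle is bookkeeping: one must set up the identification $Y \cong \bigsqcup_i C_i$ and the cocycle $(g,i) \mapsto \lambda_{g,i}$ carefully enough that the computations of $\Fix_{\tilde{\alpha}}(g) \cap C_i$ come out right. Once that is done, each of the three required properties of $\tilde{\alpha}$ is a one-line consequence of the corresponding property of $\alpha$.
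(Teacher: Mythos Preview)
Your proof is correct and follows essentially the same strategy as the paper's, only more explicitly: the paper leaves minimality and ergodicity of the induced action as an exercise and deduces non-essential-freeness exactly as you do. The one shortcut the paper takes is for topological freeness: rather than computing $\Fix_{\tilde\alpha}(g)\cap C_i$ piece by piece, it invokes Lemma~\ref{lem.URStrivial} to pick $x\in C$ with $\Stab_\alpha(x)=\{1\}$, observes that the image $y$ of $(x,1)$ then has $\Stab_{\tilde\alpha}(y)=\{1\}$, and applies Lemma~\ref{lem.URStrivial} again.
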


\begin{proof} Let $\alpha : \Lambda\curvearrowright (C,\mu)$ be an allosteric action. It is an exercise to prove that the $\Gamma$-action $\beta$ induced by $\Lambda$ is ergodic and minimal. Moreover, $\IRS(\beta)$ is non-trivial since the restriction of $\beta$ to $\Lambda$ is not essentially free. Finally, $\URS(\alpha)$ is trivial, thus there exists by Lemma \ref{lem.URStrivial} a point $x\in C$ such that $\Stab_\alpha(x)=\{1\}$. Let $y$ be the projection of $(x,1)$ onto the quotient $(C\times\Gamma)/\Lambda$, then $\Stab_\beta(y)=\{1\}$. Since $\beta$ is minimal, this implies by Lemma \ref{lem.URStrivial} that $\URS(\beta)$ is trivial. Thus $\beta$ is allosteric.
\end{proof}

\begin{prop}\label{prop.allostericsurgroupe} Any finite index subgroup of an allosteric group is allosteric.
\end{prop}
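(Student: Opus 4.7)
My plan is to reduce to the case where $\Lambda$ is normal in $\Gamma$, and then to restrict a given allosteric action of $\Gamma$ to $\Lambda$ and extract an allosteric $\Lambda$-component. Set $\Lambda_0=\bigcap_{\gamma\in\Gamma/\Lambda}\gamma\Lambda\gamma^{-1}$, the normal core of $\Lambda$: this is a normal, finite index subgroup of $\Gamma$ contained in $\Lambda$. If I prove $\Lambda_0$ is allosteric, then applying Proposition \ref{prop.allostericsousgroupe} to the finite-index pair $\Lambda_0\leq\Lambda$ immediately yields that $\Lambda$ is allosteric. So I will assume throughout that $\Lambda$ is normal in $\Gamma$, and fix an allosteric action $\alpha:\Gamma\curvearrowright(C,\mu)$.

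The key measure-theoretic step is to show that the $\Lambda$-IRS of $\alpha|_\Lambda$, namely $(\Stab_\alpha\cap\Lambda)_*\mu$, is non-trivial. Since $\URS(\alpha)$ is trivial but $\IRS(\alpha)$ is not, Proposition \ref{prop.atomless} tells us $\IRS(\alpha)$ is atomless. Now any $H\leq\Gamma$ with $H\cap\Lambda=\{1\}$ injects into the finite set $\Gamma/\Lambda$, so $|H|\leq[\Gamma:\Lambda]$. Since $\Gamma$ is countable, there are only countably many such $H$, and this countable set therefore has $\IRS(\alpha)$-measure zero. Pushing forward under $H\mapsto H\cap\Lambda$, the resulting $\Lambda$-IRS assigns zero mass to the trivial subgroup of $\Lambda$, and in particular is non-trivial.

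Next I address the topology. Pick any $\Lambda$-minimal subset $M\subset C$ (via Zorn). By normality of $\Lambda$, each translate $\gamma M$ is again $\Lambda$-minimal, and by $\Gamma$-minimality of $\alpha$, $\Gamma\cdot M=C$. Distinct $\Lambda$-minimal subsets are disjoint and there are at most $[\Gamma:\Lambda]$ translates, so I obtain a finite partition $C=\bigsqcup_{i=1}^m M_i$ into clopen $\Lambda$-minimal subsets. Each $M_i$ contains a point with trivial $\Gamma$-stabilizer: by Lemma \ref{lem.URStrivial} some $y\in C$ satisfies $\Stab_\alpha(y)=\{1\}$, and translating $y$ by a suitable $\gamma\in\Gamma$ places it in $M_i$ while keeping its $\Gamma$-stabilizer trivial (via conjugation). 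Hence by Lemma \ref{lem.URStrivial} again, the $\Lambda$-action on each $M_i$ is topologically free.

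It remains to select the invariant measure. Since $\alpha$ is $\Gamma$-ergodic and $\Lambda$ is normal of finite index, the $\Lambda$-ergodic decomposition of $\mu$ is finite: $\mu=\frac{1}{k}\sum_{j=1}^{k}\mu_j$, with $\Gamma$ permuting the components transitively. Because the $M_i$ are clopen and $\Lambda$-invariant, the $\Lambda$-ergodicity of $\mu_j$ forces $\mu_j(M_i)\in\{0,1\}$ for each $i$, so each $\mu_j$ is supported on a unique cell $M_{i(j)}$, and by $\Lambda$-minimality of $M_{i(j)}$, $\supp\mu_j=M_{i(j)}$. Since $(\Stab_\alpha\cap\Lambda)_*\mu=\frac{1}{k}\sum_j(\Stab_\alpha\cap\Lambda)_*\mu_j$ gives zero mass to $\{1\}$, so does each individual $(\Stab_\alpha\cap\Lambda)_*\mu_j$, so the $\Lambda$-action on $(M_{i(j)},\mu_j)$ is not essentially free. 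That action is also minimal, ergodic and topologically free, hence allosteric. The main obstacle, and the reason Proposition \ref{prop.atomless} is indispensable, is precisely that non-essential freeness of $\alpha$ does not a priori transfer to $\alpha|_\Lambda$: the stabilizers for $\alpha$ could sit entirely in $\Gamma\setminus\Lambda$ outside the identity. The atomless-plus-countable argument using Proposition \ref{prop.atomless} combined with the bound $|H|\leq[\Gamma:\Lambda]$ is what rules out this scenario.
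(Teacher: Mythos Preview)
Your proof is correct and follows essentially the same route as the paper's: reduce to the normal core, restrict the allosteric $\Gamma$-action, extract a $\Lambda$-minimal $\Lambda$-ergodic piece, and invoke Proposition \ref{prop.atomless} (atomlessness of $\IRS(\alpha)$) together with finiteness of $\{H\leq\Gamma:H\cap\Lambda=\{1\}\}$ to rule out essential freeness of the restriction. The only cosmetic differences are that the paper phrases the non-essential-freeness step as ``$\Stab_\alpha(x)$ is infinite a.e., hence meets $N$ non-trivially'' and builds the ergodic component by hand inside a chosen minimal set, whereas you package it via the clopen partition into $\Lambda$-minimal pieces and the finite $\Lambda$-ergodic decomposition.
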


\begin{proof} Let $\Lambda\leq\Gamma$ be a finite index subgroup. We recall the following two facts. If $\Gamma\curvearrowright (X,\mu)$ is an ergodic action, then any $\Lambda$-invariant measurable set $A\subset X$ of positive measure satisfies $\mu(A)\geq 1/[\Gamma:\Lambda]$. Moreover, for any $\Lambda$-invariant measurable set $B\subset X$ of positive measure, there exists a $\Lambda$-invariant measurable set $A\subset B$ of positive measure on which $\Lambda$ acts ergodically.

Let $\Gamma$ be an allosteric group, and let $\Lambda\leq \Gamma$ be a finite index subgroup. Let $N$ be the normal core of $\Lambda$ (the intersection of the conjugates of $\Lambda$). It is a finite index normal subgroup of $\Gamma$ which is contained in $\Lambda$. We will prove that $N$ is allosteric. Proposition \ref{prop.allostericsousgroupe} will then imply that $\Lambda$ is allosteric. We let $d=[\Gamma:N]$ and we fix $\gamma_1,\dots,\gamma_d\in\Gamma$ a coset representative system for $N$ in $\Gamma$. Let $\Gamma\curvearrowright^\alpha (C,\mu)$ be an allosteric action. For all $x\in C$, we define $\mathcal{O}_N(x)=\overline{\{\alpha(\gamma)x\mid \gamma\in N\}}$. This is a closed, $N$-invariant subset of $C$. By minimality of $\alpha$, for all $x\in C$,\[X=\bigcup_{i=1}^d\mathcal{O}_N(\alpha(\gamma_i)x).\]
Moreover, since $N$ is normal in $\Gamma$, for all $x\in C$ and $\gamma\in\Gamma$, we have $\mathcal{O}_N(\alpha(\gamma)x)=\alpha(\gamma)\mathcal{O}_N(x)$. This implies that $\mu(\mathcal{O}_N(\alpha(\gamma)x))=\mu(\mathcal{O}_N(x))$ and that $\mu(\mathcal{O}_N(x))>0$. Let $y$ be a point in some closed, $N$-invariant and $N$-minimal set. Then $N\curvearrowright\mathcal{O}_N(y)$ is minimal. Let $A\subset\mathcal{O}_N(y)$ be a $N$-invariant measurable set of positive measure on which $N$ acts ergodically. Let $\mu_A$ be the Borel probability measure on $A$ induced by $\mu$. Then $N\curvearrowright(\mathcal{O}_N(y),\mu_A)$ is an ergodic minimal action, which is still topologically free. Let us prove that it is not essentially free. Since $\alpha$ is allosteric, $\IRS(\alpha)$ is atomless, see Proposition \ref{prop.atomless}. Thus, for $\mu$-a.e. $x\in C$, $\Stab_{\alpha}(x)$ is infinite. Since $N$ has finite index in $\Gamma$, this implies that for $\mu$-a.e. $x\in C$, $\Stab_\alpha(x)\cap N$ is infinite. Thus $N\curvearrowright (\mathcal{O}_N(y),\mu_A)$ is not essentially free, and thus is allosteric.
\end{proof}

\begin{rmq}\label{rmq.BG} It is proved in \cite[Théorème $4.1$]{BergeronGaboriau} that if $\Gamma$ is isomorphic to a free product of two \emph{infinite} residually finite groups, then $\Gamma$ admits a continuum of profinite allosteric actions. Let $\Gamma'$ be a non-amenable group which is isomorphic to a free product of two non-trivial residually finite groups. Then Kurosh's theorem \cite[Section $5.5$]{Serre} implies that $\Gamma'$ admits a finite index subgroup $\Gamma$ isomorphic to a free product of finitely many (and at least two) residually finite infinite groups. Proposition \ref{prop.allostericsousgroupe} then implies that $\Gamma'$ is allosteric.
\end{rmq}

\section{Finite index subgroups of surface groups}\label{sec.finiteindex}
\subsection{Residual properties of surface groups}

A surface group is the fundamental group of a closed connected surface. If the surface is orientable, then its fundamental group is called an \emph{\textbf{orientable surface group}}, and a presentation is given by
\[\langle x_1,y_1,\dots,x_g,y_g\mid [x_1,y_1]\dots[x_g,y_g]=1\rangle,\]
for some $g\geq 1$ called the genus of the surface (if $g=0$, then the surface is a sphere, and its fundamental group is trivial).
If the surface is non-orientable, we call its fundamental group a \emph{\textbf{non-orientable surface group}}. It has a presentation given by
\[\langle x_1,\dots,x_g\mid x_1^2\dots x_g^2=1\rangle,\]
for some $g\geq 1$ called the genus of the surface.
A surface group is non-amenable if and only if it is the fundamental group of a surface other that the sphere, the torus (orientable surfaces of genus $0$ and $1$), the projective plane or the Klein bottle (non-orientable surfaces of genus $1$ and $2$).

\begin{df} Let $p$ be a prime number. A group $\Gamma$ is a \emph{\textbf{residually finite $p$-group}} if for every non-trivial element $\gamma\in\Gamma$, there exists a normal subgroup $N\trianglelefteq\Gamma$ such that $\Gamma/N$ is a finite $p$-group and $\gamma\notin N$. Equivalently, $\Gamma$ is a residually finite $p$-group if and only if there exists a chain $(\Gamma_n)_{n\geq 0}$ in $\Gamma$ consisting of normal subgroups such that for every $n\geq 0$, the quotient $\Gamma/\Gamma_n$ is a finite $p$-group, and \[\bigcap_{n\geq 0}\Gamma_n=\{1\}.\]
\end{df}




Baumslag proved in \cite{Baumslag} that orientable surface groups are residually free, i.e., for every non-trivial element $\gamma$, there exists a normal subgroup $N\trianglelefteq \Gamma$ such that $\Gamma/N$ is a free group and $\gamma\notin N$. Moreover, free groups are residually finite $p$-groups for every prime $p$, a result independently proved by Takahasi \cite{Takahasi} and by Gruenberg in \cite{Gruenberg} (using a result of Magnus \cite{Magnus}). This implies the following well-known result.

\begin{thm}\label{thm.residuellementp} Orientable surface groups are residually finite $p$-groups for every prime $p$.
\end{thm}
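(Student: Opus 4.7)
The plan is to combine the two cited results in a straightforward transitivity argument: residual freeness of $\Gamma$ (Baumslag) together with the residual $p$-ness of free groups (Takahasi--Gruenberg) gives residual $p$-ness of $\Gamma$. More generally, I would first observe (and could state as a short lemma) that if a class of groups $\mathcal{C}$ has the property that every group in $\mathcal{C}$ is residually finite $p$, then any group which is residually in $\mathcal{C}$ is itself residually finite $p$; applied with $\mathcal{C}$ the class of free groups, the theorem becomes immediate.

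Concretely, I would fix a prime $p$ and a non-trivial element $\gamma$ in an orientable surface group $\Gamma$. By Baumslag's theorem, there is a normal subgroup $N\trianglelefteq\Gamma$ such that the quotient $F:=\Gamma/N$ is a (necessarily non-trivial) free group and $\gamma\notin N$, so the image $\bar\gamma$ of $\gamma$ in $F$ is non-trivial. By the theorem of Takahasi and Gruenberg, $F$ is a residually finite $p$-group, so there exists a normal subgroup $\bar M\trianglelefteq F$ with $F/\bar M$ a finite $p$-group and $\bar\gamma\notin\bar M$. Pulling back, $M:=q^{-1}(\bar M)$, where $q:\Gamma\twoheadrightarrow F$ is the quotient map, is a normal subgroup of $\Gamma$ with $\Gamma/M\cong F/\bar M$ a finite $p$-group, and $\gamma\notin M$. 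This verifies the defining property of a residually finite $p$-group.

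To obtain the equivalent formulation with a descending chain, I would then intersect countably many such $M$'s: enumerating $\Gamma\setminus\{1\}=\{\gamma_n\}_{n\geq 1}$ and producing for each $\gamma_n$ a normal subgroup $M_n$ of finite $p$-power index excluding $\gamma_n$, the chain $\Gamma_n:=M_1\cap\cdots\cap M_n$ is a decreasing sequence of normal subgroups with $\Gamma/\Gamma_n$ a finite $p$-group (as a subgroup of a finite product of finite $p$-groups) and with $\bigcap_{n\geq 0}\Gamma_n=\{1\}$.

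There is essentially no obstacle: the whole argument is bookkeeping once the two inputs are in place. The only point worth emphasising is that transitivity of the residual property goes through the full preimage construction, and that finite $p$-groups are closed under subgroups (used to deduce that $\Gamma/\Gamma_n$ is a finite $p$-group rather than merely a finite nilpotent group).
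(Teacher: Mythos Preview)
Your proposal is correct and follows exactly the route indicated in the paper: the paper does not give a formal proof but simply records, in the paragraph preceding the theorem, that Baumslag's residual freeness of orientable surface groups combined with the Takahasi--Gruenberg theorem that free groups are residually finite $p$-groups yields the result. Your write-up merely spells out the transitivity argument (pulling back a finite $p$-quotient of the free quotient) and the passage to a chain, which the paper leaves implicit.
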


\begin{rmq}
By a result of Baumslag \cite{BaumslagBenjamin}, non-amenable non-orientable surface groups are also residually $p$-finite groups for every prime $p$. However, we leave as an exercise to the interested reader the fact that the fundamental group of a Klein bottle is not residually $p$ for some prime $p$. We will not require these results.
\end{rmq}

\subsection{Special kind of finite index subgroups in surface groups}

Let $A,B$ be two non-empty totally ordered finite sets. In what follows, when writing $\prod_{i\in A}$ or $\prod_{j\in B}$ we mean that the product is computed with respect to the increasing order of $A$ or $B$ respectively. We let $\Gamma_{A,B}$ be the group defined by the generators $(a_i,\alpha_i)_{i\in A}$ and $(b_j,\beta_j)_{j\in B}$, and the relation
\[\prod_{i\in A}[a_i,\alpha_i]=\prod_{j\in B}[b_j,\beta_j].\]
Then $\Gamma_{A,B}$ is isomorphic to a non-amenable orientable surface group, and every non-amenable orientable surface group is isomorphism to $\Gamma_{A,B}$ for some non-empty totally ordered finite sets $A$ and $B$. The group $\Gamma_{A,B}$ naturally splits as an amalgamated product
\[\Gamma_{A,B}=\Gamma_A*_\Z\Gamma_B\]
where $\Gamma_A$ and $\Gamma_B$ are the free groups of rank $2\lvert A\rvert$ and $2\lvert B\rvert$ respectively, freely generated by $(a_i,\alpha_i)_{i\in A}$ and $(b_j,\beta_j)_{j\in B}$ respectively.
If $A'\subset A$ and $B'\subset B$, there is a natural onto group homomorphism $\Gamma_{A,B}\twoheadrightarrow\Gamma_{A',B'}$ defined on the generators by
\[\begin{array}{llll}
a_i \mapsto a_i' &\text{ for every }i\in A',&\qquad\qquad b_j\mapsto b_j' &\text{ for every }j\in B',\\
\alpha_i \mapsto \alpha_i' &\text{ for every }i\in A',&\qquad\qquad \beta_j\mapsto \beta_j' &\text{ for every }j\in B',\\
a_i,\alpha_i \mapsto 1 &\text{ for every }i\in A\setminus A', &\qquad\qquad b_j,\beta_j\mapsto 1 &\text{ for every }j\in B\setminus B'.
\end{array}\]
We say that this morphism \emph{erases} the generators $a_i,\alpha_i,b_j,\beta_j$ for $i\in A\setminus A'$ and $j\in B\setminus B'$, see Figure \ref{fig.erasing}. Algebraically, $\Gamma_{A',B'}$ is isomorphic to the quotient of $\Gamma_{A,B}$ by the normal closure of the set $\{(\alpha_i,\beta_i)\mid i\in A\setminus A'\}\cup\{(b_j,\beta_j)\mid j\in B\setminus B'\}$ in $\Gamma_{A,B}$, and the homomorphism $\Gamma_{A,B}\onto\Gamma_{A',B'}$ corresponds to the quotient group homomorphism.

\begin{figure}[h!]
\centering
\def\svgwidth{\columnwidth}
\includegraphics[scale=1]{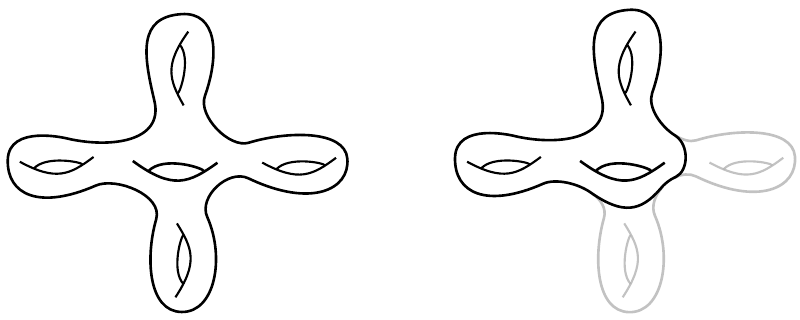}
\caption{An illustration of the morphism that erases generators.}
\label{fig.erasing}\end{figure}

Here is the main theorem of this section. In what follows, $\Z[1/p]$ denotes the set of rational numbers of the form $k/p^n$ for $k,n\in \Z$.

\begin{thm}\label{thm.sousgroupe}
\label{prop.sousgroupe} Let $\Gamma$ be a non-amenable orientable surface group, and fix a decomposition $\Gamma=\Gamma_A*_\Z\Gamma_B$ as above. Let $p$ be a prime number, and $r\in]0,1[\cap\Z[1/p]$. Let $\langle\langle \Z\rangle\rangle^{\Gamma_B}$ be the normal closure of the amalgamated subgroup $\Z$ in $\Gamma_B$. For every non-trivial $\gamma\in\Gamma$ and for every element $\delta\in\Gamma_B\setminus \langle\langle \Z\rangle\rangle^{\Gamma_B}$, there exists a finite index subgroup $\Lambda\leq\Gamma$ such that
\begin{itemize}
\item[$(i)$] $\gamma\notin\Lambda$.
\item[$(ii)$] The index $[\Gamma:\Lambda]$ is a power of $p$.
\item[$(iii)$] The number of left cosets $x\in\Gamma/\Lambda$ that are fixed by every element in $\Gamma_A$ is equal to $r[\Gamma:\Lambda]$.
\item[$(vi)$] None of the left coset $x\in\Gamma/\Lambda$ is fixed by $\delta$.
\end{itemize}
\end{thm}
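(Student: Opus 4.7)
The plan is to construct a transitive action of $\Gamma$ on a finite set $X$ with $|X|$ a power of $p$ such that exactly $r|X|$ points of $X$ are fixed by all of $\Gamma_A$, no point is fixed by $\delta$, and some distinguished $x_0\in X$ satisfies $\gamma\cdot x_0\neq x_0$. Setting $\Lambda:=\Stab(x_0)$ then gives conditions $(i)$--$(iv)$ at once. The key tool is the universal property of the amalgamated product $\Gamma=\Gamma_A*_\Z\Gamma_B$: specifying a $\Gamma$-action on $X$ amounts to giving two actions $\sigma_A:\Gamma_A\to\mathrm{Sym}(X)$ and $\sigma_B:\Gamma_B\to\mathrm{Sym}(X)$ that agree on the generator $c$ of the amalgamated $\Z$.

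I would write $X=X_1\sqcup X_2$ with $|X_1|=r|X|$, arrange $\sigma_A$ to fix $X_1$ pointwise and have no common fixed point on $X_2$ (realizing $(iii)$), and design $\sigma_B$ so that $\sigma_B(c)=\sigma_A(c)$ (forced, since $c\in\Gamma_A$), $\sigma_B(\delta)$ is fixed-point free, and the combined action is transitive. When $\gamma\notin\langle\langle c\rangle\rangle^\Gamma$, the cleanest approach is $\sigma_A(c)=\sigma_B(c)=\mathrm{id}_X$; the $\Gamma$-action then factors through $\Gamma/\langle\langle c\rangle\rangle^\Gamma\cong S_A*S_B$, where $S_A:=\Gamma_A/\langle\langle c\rangle\rangle^{\Gamma_A}$ and $S_B:=\Gamma_B/\langle\langle c\rangle\rangle^{\Gamma_B}$ are orientable surface groups of genera $|A|$ and $|B|$ (non-amenable and residually $p$-finite by Theorem \ref{thm.residuellementp}, or $\Z^2$ when the genus is one). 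Since the free product of residually $p$-finite groups is itself residually $p$-finite, there is a finite $p$-group quotient $\pi:S_A*S_B\twoheadrightarrow Q$ in which $\delta$ has nontrivial image (possible because $\delta\notin\langle\langle c\rangle\rangle^{\Gamma_B}$). Letting $\sigma_B$ be the regular action of $Q$ on itself secures $(iv)$ and transitivity at once. For $\sigma_A$, I take $\sigma_A(a_1)$ to be a cycle on $X_2$ fixing $X_1$, $\sigma_A(\alpha_1)$ a commuting permutation (say a power of the same cycle), and all other generators trivial; then $\sigma_A(c)=\mathrm{id}$ and the $\Gamma_A$-fixed set equals $X_1$ exactly. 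An additional residual $p$-argument applied to $\gamma$ inside $S_A*S_B$ yields $(i)$.

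The main obstacle is twofold. First, the case $\gamma\in\langle\langle c\rangle\rangle^\Gamma$ is not handled by the above shortcut, because $\sigma(c)=\mathrm{id}$ forces $\sigma(\gamma)=\mathrm{id}$. In this regime $\sigma_A(c)=\sigma_B(c)$ must be a nontrivial permutation of $X_2$ (still trivial on $X_1$); the regular $\sigma_B$ is no longer available since left multiplication by a nontrivial group element has no fixed points at all, conflicting with the required pointwise fixity on $X_1$. One must then design $\sigma_B$ via a non-regular coset action, invoke residual $p$-finiteness of $\Gamma$ itself to keep $\sigma(\gamma)$ nontrivial, and verify the amalgamation identity $\sigma_A(c)=\sigma_B(c)$ as a common product of commutators $\prod_i[\sigma(a_i),\sigma(\alpha_i)]=\prod_j[\sigma(b_j),\sigma(\beta_j)]$. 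Second, realizing the exact ratio $r=k/p^n$ while preserving transitivity forces a careful choice of $|X|=p^m$ with $m\geq n$, together with a partition of $X$ of cardinalities $(kp^{m-n},(p^n-k)p^{m-n})$ compatible with both actions. The amalgamated decomposition is essential throughout because it decouples the constraints imposed by $\Gamma_A$ from those imposed by $\Gamma_B$, and the free-group nature of both factors is what provides the flexibility to realize the required permutations.
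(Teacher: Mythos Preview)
Your proposal has a genuine gap that goes beyond the two obstacles you flag at the end. Even in the ``easy'' case $\gamma\notin\langle\langle c\rangle\rangle^\Gamma$, condition $(i)$ is not established by your construction. You fix $\sigma_B$ as the regular action of a $p$-quotient $Q$ of $S_A*S_B$, and $\sigma_A$ as essentially a single cycle on $X_2$ (sending $a_1$ to that cycle, $\alpha_1$ to a power of it, and every other generator to the identity). The image of $\sigma_A$ is then a cyclic group, so $\sigma$ kills, for instance, every element of $\Gamma_A$ lying in $\ker(\Gamma_A\twoheadrightarrow\langle a_1\rangle)$; more generally there is no mechanism ensuring $\sigma(\gamma)x_0\neq x_0$ for an arbitrary mixed word $\gamma$. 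The phrase ``an additional residual $p$-argument applied to $\gamma$ inside $S_A*S_B$'' does not rescue this: enlarging $Q$ changes $\sigma_B$ and $|X|$, but $\sigma_A$ is defined independently of $Q$, its image stays cyclic of order $|X_2|$, and the combined $\sigma$ does not factor through $Q$. Coupling the \emph{exact} ratio $r$ (which pins down the fixed-point set of $\sigma_A$) with separation of an arbitrary $\gamma$ is precisely the heart of the problem, and your sketch does not address it; the acknowledged hard case $\gamma\in\langle\langle c\rangle\rangle^\Gamma$ only compounds this.

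The paper sidesteps the difficulty by working top-down instead of assembling the action from $\sigma_A,\sigma_B$. It first passes to the cyclic $p$-power cover $\Lambda_d=\ker(\Gamma\to\Z/d\Z)$ given by $b_{j_0}\mapsto 1$, so that $\Lambda_d$ contains $d$ conjugate copies $b_{j_0}^k\Gamma_A b_{j_0}^{-k}$. It then takes $N$ to be the normal closure in $\Lambda_d$ of exactly $rd$ of these copies, indexed by a set $E\subset\{n+1,\dots,d-1-n\}$ where $n=|\gamma|_S$; this choice of $E$ forces $\gamma\notin N$ uniformly, with no case distinction on whether $\gamma\in\langle\langle c\rangle\rangle^\Gamma$. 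The quotient $\Lambda_d/N$ is again an orientable surface group, so a single application of residual $p$-finiteness produces $N'\trianglelefteq\Lambda_d/N$ of $p$-power index avoiding $\pi(\gamma)$, all surviving $\pi(a_{i,k}),\pi(\alpha_{i,k})$ (for $k\notin E$), and all conjugates $\pi(b_{j_0}^k\delta b_{j_0}^{-k})$; then $\Lambda=\pi^{-1}(N')$. The point is that the ratio $r$ is encoded purely combinatorially at the cyclic-cover level by $|E|/d$, while separating $\gamma$ and $\delta$ is deferred to one residual-$p$ step in the smaller surface group $\Lambda_d/N$; your approach tries to do both simultaneously in $\mathrm{Sym}(X)$ and the two constraints collide.
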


\begin{proof}
Fix $A,B$ two non-empty totally ordered finite sets, such that $\Gamma$ is isomorphic to $\Gamma_{A,B}$. Let $S$ be the set of generators $(a_i,\alpha_i)_{i\in A}$ and $(b_j,\beta_j)_{j\in B}$. Let $j_0$ be the smallest element in $B$. Let $\gamma\in\Gamma\setminus\{1\}$ and $\delta\in\Gamma_B\setminus\langle\langle\Z\rangle\rangle^{\Gamma_B}$. Let $p$ be a prime number, and $r\in]0,1[\cap\Z[1/p]$.
\paragraph{Step 1: Cyclic covering.} Let $\varphi : \Gamma_{A,B}\onto\Z$ be the onto homomorphism defined on the generators of $\Gamma_{A,B}$ by
\[\begin{array}{l}
\varphi(b_{j_0})=1,\ \varphi(\beta_{j_0})=0,\\
\varphi(a_i)=\varphi(\alpha_i)=\varphi(b_j)=\varphi(\beta_j)=0 \text{ for every }i\in A, j\in B\setminus\{j_0\}.
\end{array}\]
For every $d\geq 1$, we let $\Lambda_d$ be the kernel of the homomorphism $\Gamma\to\Z/d\Z$ obtained by composing $\varphi$ with the homomorphism of reduction modulo $d$. Then $\Lambda_d$ is a surface group. Let us describe a generating set for $\Lambda_d$. For every $0\leq k\leq d-1$ and $i\in A$, let $a_{i,k}$ and $\alpha_{i,k}$ be the conjugates of $a_i$ and $\alpha_i$ respectively, by $b_{j_0}^k$. Similarly, let $b_{j,k}$, and $\beta_{j,k}$ be the conjugate of $b_j$ and $\beta_j$ respectively, by $b_{j_0}^k$. Then $\Lambda_d$ is generated by the set
\[\bigcup_{k=0}^{d-1}\{a_{i,k},\alpha_{i,k}\mid i\in A\}\cup \bigcup_{k=0}^{d-1}\{b_{j,k},\beta_{j,k}\mid j\in B\setminus\{j_0\}\}\cup \{b_{j_0}^{d},\beta_{j_0}\}.\]
So far, every left coset $x\in\Gamma/\Lambda_d$ is fixed by every element of $\Gamma_A$, and either every or none of the left coset $x\in\Gamma/\Lambda_d$ is fixed by $\delta$, depending on whether $\delta\in\Lambda_d$ or not.

\paragraph{Step 2: Erasing the right amount of generators.} Let $n$ be the length of $\gamma\in\Gamma\setminus\{1\}$ in the generating set $S$. In the sequel we let $d$ be a (large enough) power of the prime $p$ such that $rd$ is an integer, and $rd+n\leq d$. Let $E\subset \{n+1,\dots,d-1-n\}$ be a subset of cardinality $rd$, so that $\gamma$ doesn't belong to the normal closure $N$ of the set $\cup_{k\in E}b_{j_0}^k\Gamma_Ab_{j_0}^{-k}$ in $\Lambda_d$. Let us prove that none of the conjugate of $\delta$ by a power of $b_{j_0}$ belongs to $N$. Assume this is not the case, then this would imply that $\delta$ belongs to the normal closure of $\cup_{k=0}^{d-1}b_{j_0}^k\Gamma_Ab_{j_0}^{-k}$ in $\Lambda_d$, which is easily seen to be equal to the normal closure $\langle\langle \Gamma_A\rangle\rangle^\Gamma$ of $\Gamma_A$ in $\Gamma$. But the group $\Gamma/\langle\langle\Gamma_A\rangle\rangle^\Gamma$ is naturally isomorphic to $\Gamma_B/\langle\langle\Z\rangle\rangle^{\Gamma_B}$, in such a way that the following diagram commutes
\[\xymatrix{
 \Gamma_B \ar[d] \ar[rd] & \\
 \Gamma/\langle\langle\Gamma_A\rangle\rangle^\Gamma \ar[r] & \Gamma_B/\langle\langle\Z\rangle\rangle^{\Gamma_B},}
\]
which implies that $\Gamma_B\cap\langle\langle\Gamma_A\rangle\rangle^\Gamma$ is equal to $\langle\langle \Z\rangle\rangle^{\Gamma_B}$. This would thus imply that $\delta\in \langle\langle \Z\rangle\rangle^{\Gamma_B}$, a contradiction.


\paragraph{Step 3: The group $\Lambda_d/N$ is a residually finite $p$-group.} We let $\pi : \Lambda_d\onto \Lambda_d/N$ be the quotient group homomorphism. Since $\Lambda_d/N$ is an orientable surface group, it is a residually finite $p$-group by Theorem \ref{thm.residuellementp}. Thus, there exists a normal subgroup $N'\trianglelefteq \Lambda_d/N$ whose index is a power of $p$, such that for every $k\in\{0,\dots,d-1\}\setminus E$, for every $i\in A$, $\pi(a_{i,k})\notin N'$ and $\pi(\alpha_{i,k})\notin N'$. If $\gamma\in\Lambda_d$, we also assume that $\pi(\gamma)\notin N$, and if $\delta\in\Lambda_d$, we also assume that for all $k\in\{0,\dots,d-1\}$, $\pi(b_{j_0}^k\delta b_{j_0}^{-k})\notin N'$. Let us prove that the subgroup $\Lambda:=\pi^{-1}(N')$ of $\Gamma$ satisfies the four conclusions of the theorem.

\begin{proof}[Proof of $(i)$]\renewcommand{\qedsymbol}{} Either $\gamma\notin\Lambda_d$ and thus $\gamma\notin\Lambda$, or $\gamma\in\Lambda_d$ and $\pi(\gamma)\notin N$.\qedhere
\end{proof}
\begin{proof}[Proof of $(ii)$]\renewcommand{\qedsymbol}{} Since the index of $N'$ in $\Lambda_d/N$ is a power of $p$, $[\Lambda_d:\Lambda]$ is also a power of $p$. Thus $[\Gamma:\Lambda]=[\Gamma:\Lambda_d][\Lambda_d:\Lambda]$ is a power of $p$.
\end{proof}
\begin{proof}[Proof of $(iii)$]\renewcommand{\qedsymbol}{} By construction, $x\in\Gamma/\Lambda$ is fixed by every element in $\Gamma_A$ if and only its image under the canonical $[\Lambda_d:\Lambda]$-to-one map $\Gamma/\Lambda\mapsto\Gamma/\Lambda_d$ is equal to $b_{j_0}^k\Lambda_d$ for some $k\in E$. Since $\lvert E\rvert=rd$, there are exactly $rd[\Lambda_d:\Lambda]=r[\Gamma:\Lambda]$ such $x\in\Gamma/\Lambda$.
\end{proof}
\begin{proof}[Proof of $(iv)$] If $\delta\notin\Lambda_d$, then none of the coset $x\in\Gamma/\Lambda$ is fixed by $\delta$. If $\delta\in\Lambda_d$, then for all $k\in\{0,\dots,d-1\}$, we have $\pi(b_{j_0}^k\delta b_{j_0}^{-k})\notin N'$, and thus $\delta b_{j_0}^{-k}\Lambda\neq b_{j_0}^{-k}\Lambda$. By normality of $\Lambda$ in $\Lambda_d$, we deduce that none of the coset $x\in\Gamma/\Lambda$ is fixed by $\delta$.
\end{proof}
\renewcommand{\qedsymbol}{}
\end{proof}

\begin{figure}[h!]
\centering
\def\svgwidth{\columnwidth}
\includegraphics[scale=1]{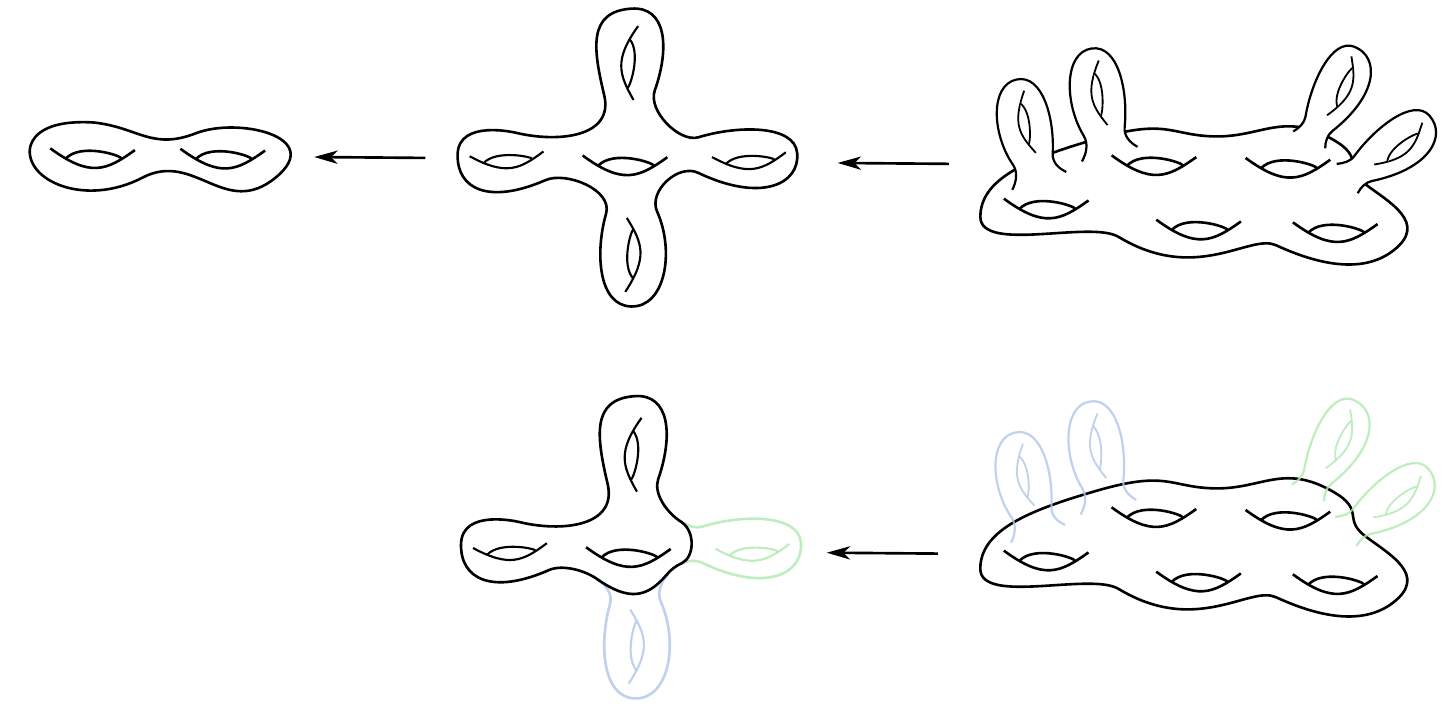}
\caption{Illustrations of the proof of Theorem \ref{thm.sousgroupe}. The above line illustrates the coverings corresponding to the inclusions $\Lambda\leq\Lambda_d\leq\Gamma$. The bottom line illustrates the covering corresponding to the inclusion $N'\leq \Lambda_d/N$.}
\label{fig.cycliccovering}\end{figure}

\section{Proof of the main theorem}\label{sec.proof}

In this section, we give the proof of Theorem \ref{thm.main}. More precisely, we prove the following results.

\begin{thm}[Orientable case]\label{thm.mainraffinerorientable} Let $\Gamma$ be a non-amenable orientable surface group, and fix a decomposition $\Gamma=\Gamma_A*_\Z\Gamma_B$ as above. Let $\langle\langle\Z\rangle\rangle^{\Gamma_B}$ be the normal closure of the amalgamated subgroup $\Z$ in $\Gamma_B$. Then there exists a continuum $(\alpha^t)_{0<t<1}$ of ergodic profinite allosteric actions of $\Gamma$ such that for all $0<t<1$,
\begin{enumerate}
\item The set of points whose stabilizer for $\alpha^t$ contains $\Gamma_A$ has measure $t$.
\item Each element of $\Gamma_B\setminus\langle\langle \Z\rangle\rangle^{\Gamma_B}$ acts essentially freely for $\alpha^t$.
\end{enumerate}
In particular, for all $0<s<t<1$, the actions $\alpha^s$ and $\alpha^t$ are neither topologically nor measurably isomorphic, and the probability measures $\IRS(\alpha^s)$ and $\IRS(\alpha^t)$ are distinct.
\end{thm}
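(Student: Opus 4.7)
The plan is to realize each $\alpha^t$ as an ergodic profinite action of the form $\varprojlim\Gamma/\Lambda_n$, where $(\Lambda_n)_{n\geq 0}$ is a chain of finite index subgroups of $\Gamma$ constructed recursively using Theorem~\ref{thm.sousgroupe}. Fix once and for all a prime $p$ and enumerations $\Gamma\setminus\{1\}=\{\gamma_n\}_{n\geq 1}$ and $\Gamma_B\setminus\langle\langle\Z\rangle\rangle^{\Gamma_B}=\{\delta_n\}_{n\geq 1}$. Given $t\in(0,1)$, choose a strictly decreasing sequence $(r_n)_{n\geq 1}$ in $\Z[1/p]\cap(0,1)$ converging to $t$.

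I will build a decreasing chain $\Gamma=\Lambda_0\geq\Lambda_1\geq\Lambda_2\geq\cdots$ such that for every $n\geq 1$: (i)~$[\Gamma:\Lambda_n]$ is a power of $p$; (ii)~$\gamma_n\notin\Lambda_n$; (iii)~exactly $r_n[\Gamma:\Lambda_n]$ cosets in $\Gamma/\Lambda_n$ are fixed by $\Gamma_A$; and (iv)~no coset of $\Lambda_n$ in $\Gamma$ is fixed by any of $\delta_1,\dots,\delta_n$. This requires a mild strengthening of Theorem~\ref{thm.sousgroupe} to accommodate several simultaneous avoidance conditions: Step~3 of its proof uses residual-$p$-ness of the surface group $\Lambda_d/N$, which handles any finite list of ``avoid $\pi(\cdot)\notin N'$'' requirements via intersection of finitely many $p$-power-index normal subgroups. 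The extra constraint $\Lambda_n\leq\Lambda_{n-1}$ will be secured by choosing the cyclic-cover degree $d_n$ at step $n$ as a multiple of $d_{n-1}$ (so that $\Lambda_{d_n}\leq\Lambda_{d_{n-1}}$) and placing $N'_n$ inside the normal subgroup of $\Lambda_{d_n}/N_n$ given by the image of $\Lambda_{n-1}\cap\Lambda_{d_n}$; this uses that $\Lambda_{n-1}$ is normal in $\Lambda_{d_{n-1}}$ by the construction at the previous step.

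Granting the chain, four points need to be verified. First, $(\varprojlim\Gamma/\Lambda_n,\mu)$ is an ergodic profinite $\Gamma$-action by Lemma~\ref{lem.ergodicminimal}. Second, the URS is trivial: by (ii) and the enumeration of $\Gamma\setminus\{1\}$ one has $\bigcap_n\Lambda_n=\{1\}$, so the point $x_\ast:=(\Lambda_n)_n$ has trivial stabilizer and Lemma~\ref{lem.URStrivial} applies. Third, the closed set $A:=\{x\in\varprojlim\Gamma/\Lambda_n:\Gamma_A\leq\Stab_{\alpha^t}(x)\}$ equals $\bigcap_n\pi_n^{-1}(F_n)$, where $F_n\subseteq\Gamma/\Lambda_n$ denotes the finite set of $\Gamma_A$-fixed cosets; the chain condition gives $q_n(F_{n+1})\subseteq F_n$, so the sets $\pi_n^{-1}(F_n)$ form a decreasing sequence and
\[\mu(A)=\lim_{n\to\infty}\mu_n(F_n)=\lim_{n\to\infty}r_n=t,\]
which is property~(1). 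Fourth, any $\delta\in\Gamma_B\setminus\langle\langle\Z\rangle\rangle^{\Gamma_B}$ equals $\delta_k$ for some $k$; then (iv) forces $\Fix_{\alpha^t}(\delta)=\emptyset$, so $\delta$ acts essentially freely, giving property~(2). Combining (1) with URS triviality makes $\alpha^t$ allosteric. For the final claim, $\IRS(\alpha^t)$ assigns mass $t$ to the closed subset $\{H\in\Sub(\Gamma):\Gamma_A\leq H\}$, so $\IRS(\alpha^s)\neq\IRS(\alpha^t)$ whenever $s\neq t$; this rules out measurable isomorphism, and since ergodic profinite actions are uniquely ergodic (Lemma~\ref{lem.ergodicminimal}) the IRS is also a topological invariant, so $\alpha^s$ and $\alpha^t$ cannot be topologically isomorphic either.

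The main obstacle is the chain construction: naively intersecting the subgroups produced independently by Theorem~\ref{thm.sousgroupe} destroys the exact fraction in~(iii), so the recursion must be performed inside a single cyclic-covering framework compatible with all previously chosen levels, carefully maintaining the normality of $\Lambda_n$ in $\Lambda_{d_n}$ needed for the count of $\Gamma_A$-fixed cosets to remain exact.
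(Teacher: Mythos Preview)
Your approach diverges from the paper's in a key way: you fix a \emph{single} prime $p$ and try to build a nested chain $(\Lambda_n)$ by recursively reworking the proof of Theorem~\ref{thm.sousgroupe}, whereas the paper uses a \emph{different} prime $p_n$ at each stage, applies Theorem~\ref{thm.sousgroupe} independently to produce subgroups $\Lambda_n^t$ of pairwise coprime index, and then sets $\Gamma_n^t=\Lambda_1^t\cap\cdots\cap\Lambda_n^t$. The coprimality allows Lemma~\ref{lem.chinois} (a Chinese-remainder statement) to identify $\Gamma/\Gamma_n^t$ with the product $\prod_{i\leq n}\Gamma/\Lambda_i^t$, so the $\Gamma_A$-fixed cosets form a product and their fraction is exactly $r_1\cdots r_n$; choosing $r_n\in\Z[1/p_n]\cap(0,1)$ with $\prod_n r_n=t$ then gives the measure~$t$ claim with no nesting bookkeeping whatsoever.

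Your route, by contrast, has a genuine gap at exactly the point you flag as ``the main obstacle.'' To have $\Lambda_n\leq\Lambda_{n-1}$ you need $N_n\leq\Lambda_{n-1}\cap\Lambda_{d_n}$, which forces the erasing set $E_n\subset\{0,\dots,d_n-1\}$ to lie in the preimage of $E_{n-1}$ under reduction mod~$d_{n-1}$; simultaneously $E_n$ must avoid the window $\{0,\dots,\ell\}\cup\{d_n-\ell,\dots,d_n-1\}$ (with $\ell$ the length of $\gamma_n$) to keep $\gamma_n\notin N_n$, and must have size exactly $r_n d_n$. These constraints are compatible for $d_n$ large, but you have not verified this, nor have you checked that the quotient $(\Lambda_{d_n}/N_n)\big/\mathrm{image}(\Lambda_{n-1}\cap\Lambda_{d_n})$ still permits choosing $N'_n$ of $p$-power index avoiding all the required images $\pi(a_{i,k}),\pi(\alpha_{i,k})$ for $k\notin E_n$ together with the finitely many $\pi(b_{j_0}^k\delta_m b_{j_0}^{-k})$. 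This is considerably more than a ``mild strengthening'' of Theorem~\ref{thm.sousgroupe}: it is a full recursive re-proof with new compatibility hypotheses at every step. The paper's distinct-primes trick sidesteps all of this, and you should adopt it.
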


\begin{thm}[Non-orientable case]\label{thm.mainraffinernonorientable} Let $\Gamma'$ be a non-amenable non-orientable surface group. Then there exists an index two subgroup $\Gamma\leq\Gamma'$ which is isomorphic to an orientable surface group, and which decomposes as $\Gamma=\Gamma_A*_\Z\Gamma_B$, and a continuum $(\beta^t)_{0<t<1}$ of ergodic profinite allosteric actions of $\Gamma'$ such that for all $0<t<1$, the set of points whose stabilizer for $\beta^t$ contains $\Gamma_A$ has measure $t/2$. In particular, for all $0<s<t<1$, the actions $\beta^s$ and $\beta^t$ are neither topologically nor measurably isomorphic, and the probability measures $\IRS(\beta^s)$ and $\IRS(\beta^t)$ are distinct.
\end{thm}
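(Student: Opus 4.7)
The plan is to reduce the non-orientable case to Theorem~\ref{thm.mainraffinerorientable} by passing to the orientation double cover and inducing actions. Since $\Gamma'$ is non-amenable, the underlying non-orientable closed surface $\Sigma'$ has non-orientable genus $g \geq 3$, and its orientation double cover $\Sigma$ is a closed orientable surface of genus $g - 1 \geq 2$. The inclusion $\Gamma := \pi_1(\Sigma) \hookrightarrow \pi_1(\Sigma') = \Gamma'$ is of index two (hence normal), and $\Gamma$ is a non-amenable orientable surface group. I would fix a decomposition $\Gamma = \Gamma_A *_\Z \Gamma_B$ coming from a separating simple closed curve $c \subset \Sigma$ chosen compatibly with the deck involution: concretely, $c$ is $\tau$-invariant (for a fixed representative $\tau \in \Gamma' \setminus \Gamma$) and the two complementary subsurfaces of $\Sigma \setminus c$ are exchanged by $\tau$, so that $\tau^{-1}\Gamma_A\tau = \Gamma_B$.

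Next I would apply Theorem~\ref{thm.mainraffinerorientable} to $\Gamma$ to obtain the continuum $(\alpha^t)_{0<t<1}$, and define $\beta^t$ to be the $\Gamma'$-action induced from $\alpha^t$. Proposition~\ref{prop.allostericsousgroupe} then yields that $\beta^t$ is an ergodic allosteric $\Gamma'$-action. To see that $\beta^t$ is profinite, realize $\alpha^t$ as $\Gamma \curvearrowright \varprojlim \Gamma/\Lambda_n$ for some chain $(\Lambda_n)_{n \geq 0}$ in $\Gamma$; since each $\Lambda_n$ also has finite index in $\Gamma'$, the induced action $\beta^t$ is measurably isomorphic to $\Gamma' \curvearrowright \varprojlim \Gamma'/\Lambda_n$, hence is profinite.

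The key computation is the measure of $\{y \in C' : \Gamma_A \subset \Stab_{\beta^t}(y)\}$. The induced space $C'$ decomposes as two copies of the space $C$ of $\alpha^t$, indexed by $1$ and $\tau$, each of $\mu'$-measure $1/2$. A direct calculation gives $\Stab_{\beta^t}([x, \gamma']) = \gamma'\Stab_{\alpha^t}(x)(\gamma')^{-1}$ for $\gamma' \in \{1, \tau\}$, and hence
\[
\mu'\bigl(\{y : \Gamma_A \subset \Stab_{\beta^t}(y)\}\bigr) = \tfrac{1}{2}\mu\bigl(\{x : \Gamma_A \subset \Stab_{\alpha^t}(x)\}\bigr) + \tfrac{1}{2}\mu\bigl(\{x : \tau^{-1}\Gamma_A\tau \subset \Stab_{\alpha^t}(x)\}\bigr).
\]
The first term equals $t/2$ by condition $(1)$ of Theorem~\ref{thm.mainraffinerorientable}. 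The second term vanishes because $\tau^{-1}\Gamma_A\tau = \Gamma_B$ contains elements of $\Gamma_B \setminus \langle\langle\Z\rangle\rangle^{\Gamma_B}$, each of which acts essentially freely for $\alpha^t$ by condition $(2)$ of Theorem~\ref{thm.mainraffinerorientable}. This yields the claimed value $t/2$, and since this quantity depends injectively on $t$ and is an affine function of $\IRS(\beta^t)$ (it is the $\IRS(\beta^t)$-measure of the Borel set $\{H \in \Sub(\Gamma') : \Gamma_A \subset H\}$), distinct values of $t$ give distinct IRS's and hence pairwise non-isomorphic actions, both topologically and measurably.

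The main obstacle I anticipate is the first step: proving that the orientation double cover of every non-amenable non-orientable closed surface admits a separating simple closed curve which is preserved by the deck involution and whose two complementary subsurfaces are exchanged by it. This is a piece of elementary surface topology that should be handled by a direct geometric construction on $\Sigma'$, but it may require a brief case analysis depending on the parity of the non-orientable genus.
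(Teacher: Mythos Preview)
Your overall strategy—pass to the orientation double cover $\Gamma\leq\Gamma'$, apply Theorem~\ref{thm.mainraffinerorientable} to $\Gamma$, induce, and compute the $\Gamma_A$-fixed measure as a sum of two halves—is exactly the paper's. The one substantive divergence is in how you arrange the decomposition $\Gamma=\Gamma_A*_\Z\Gamma_B$ relative to the deck involution, and that is where your argument has a genuine gap.

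You ask for a separating simple closed curve $c\subset\Sigma$ which is invariant under the deck involution and whose two complementary subsurfaces are \emph{exchanged}, so that $\tau^{-1}\Gamma_A\tau=\Gamma_B$. But if the two sides are exchanged they are homeomorphic, hence have equal genus, forcing the orientable genus $g-1$ to be even. When the non-orientable genus $g$ is even (e.g.\ $g=4$, so $\Sigma$ has genus~$3$) no such curve exists, and this is not a matter of a ``brief case analysis'': your construction simply does not go through for half of the surfaces in question.

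The paper avoids this parity obstruction by asking for much less. Using the standard antipodally symmetric embedding of $\Sigma$ in $\R^3$, it chooses $\Sigma_A,\Sigma_B$ with $\lvert A\rvert\leq\lvert B\rvert$ so that merely $\iota(\Sigma_A)\subset\Sigma_B$; picking a path in $\Sigma_B$ from the basepoint $p$ to $\iota(p)$ yields $\gamma_0\in\Gamma'\setminus\Gamma$ with $\gamma_0\Gamma_A\gamma_0^{-1}\leq\Gamma_B$ (containment, not equality). The second half of the fixed-measure computation then requires only that $\gamma_0\Gamma_A\gamma_0^{-1}$ meet $\Gamma_B\setminus\langle\langle\Z\rangle\rangle^{\Gamma_B}$, which the paper checks by an abelianization argument: the amalgamated $\Z$, hence $\langle\langle\Z\rangle\rangle^{\Gamma_B}$, lies in the derived subgroup $D(\Gamma)$, whereas $\Gamma_A$ (and so its conjugate) does not. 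Your stronger hypothesis $\tau^{-1}\Gamma_A\tau=\Gamma_B$ would make this step trivial when it holds, but the paper's weaker containment works uniformly for all genera at the cost of this short extra argument. Replacing your swap requirement by the containment $\gamma_0\Gamma_A\gamma_0^{-1}\leq\Gamma_B$ and inserting the derived-subgroup observation fixes your proof.
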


During the proof of these theorems, we will need the following lemma.

\begin{lem}\label{lem.chinois} Let $\Gamma$ be a group, and $\Lambda_1,\dots,\Lambda_n$ be finite index subgroups of $\Gamma$. If the indices $[\Gamma:\Lambda_i]$, $i\in\{1,\dots,n\}$, are pairwise coprime integers, then the left coset action $\Gamma\curvearrowright \Gamma/(\Lambda_1\cap\dots\cap\Lambda_n)$ is isomorphic to the diagonal action $\Gamma\curvearrowright \Gamma/\Lambda_1\times\dots\times\Gamma/\Lambda_n$ of the left coset actions $\Gamma\curvearrowright \Gamma/\Lambda_i$.\end{lem}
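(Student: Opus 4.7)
The plan is to exhibit an explicit $\Gamma$-equivariant bijection and then to verify it is a bijection by a cardinality comparison, using coprimality only in the very last step. Write $\Lambda := \Lambda_1 \cap \dots \cap \Lambda_n$ and define
\[
\Phi : \Gamma/\Lambda \to \Gamma/\Lambda_1 \times \dots \times \Gamma/\Lambda_n, \qquad g\Lambda \mapsto (g\Lambda_1, \dots, g\Lambda_n).
\]
I would first check that $\Phi$ is well-defined and $\Gamma$-equivariant (both immediate), and injective: if $(g\Lambda_1,\dots,g\Lambda_n) = (h\Lambda_1,\dots,h\Lambda_n)$, then $h^{-1}g \in \Lambda_i$ for every $i$, hence $h^{-1}g \in \Lambda$.

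Since the target of $\Phi$ is a finite set of cardinality $\prod_{i=1}^n [\Gamma:\Lambda_i]$, surjectivity reduces to the equality $[\Gamma:\Lambda] = \prod_{i=1}^n [\Gamma:\Lambda_i]$. For the $\leq$ direction, I would proceed by induction on $n$: the case $n=2$ is exactly the injectivity of $\Phi$ applied to $\Lambda_1,\Lambda_2$, which gives $[\Gamma:\Lambda_1\cap\Lambda_2] \leq [\Gamma:\Lambda_1][\Gamma:\Lambda_2]$, and the inductive step follows by grouping $\Lambda_1\cap\dots\cap\Lambda_{n-1}$. For the $\geq$ direction, the inclusion $\Lambda \leq \Lambda_i$ shows that each $[\Gamma:\Lambda_i]$ divides $[\Gamma:\Lambda]$; by the pairwise coprimality assumption, the least common multiple of the $[\Gamma:\Lambda_i]$ equals $\prod [\Gamma:\Lambda_i]$, and this product therefore also divides $[\Gamma:\Lambda]$. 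Combining both inequalities yields the desired equality.

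With the cardinalities matching, the injective $\Gamma$-equivariant map $\Phi$ is automatically surjective, hence an isomorphism of $\Gamma$-sets. There is no real obstacle here: the argument is an elementary Chinese-remainder-type computation, and the only place coprimality enters is the lower bound on $[\Gamma:\Lambda]$.
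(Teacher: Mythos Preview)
Your argument is correct and follows essentially the same route as the paper's proof: both define the natural $\Gamma$-equivariant map $g\Lambda \mapsto (g\Lambda_1,\dots,g\Lambda_n)$, observe it is injective, and then use coprimality to show that $\prod_i [\Gamma:\Lambda_i]$ divides $[\Gamma:\Lambda]$, forcing the map to be a bijection. The only cosmetic difference is that you obtain the upper bound $[\Gamma:\Lambda] \le \prod_i [\Gamma:\Lambda_i]$ by an induction on $n$, whereas it already follows immediately from the injectivity of $\Phi$ for general $n$.
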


\begin{proof} The kernel of the group homomorphism $\Gamma\to \Gamma/\Lambda_1\times \dots\times\Gamma/\Lambda_n$ defined by $\gamma\mapsto (\gamma\Lambda_1,\dots,\gamma\Lambda_n)$ is equal to $\Lambda_1\cap\dots\cap\Lambda_n$. Thus $\Gamma/(\Lambda_1\cap\dots\cap\Lambda_n)$ is isomorphic to a subgroup of $\Gamma/\Lambda_1\times\dots\times\Gamma/\Lambda_n$. Moreover, for every $1\leq i\leq n$,
\[[\Gamma:\Lambda_1\cap\dots\cap \Lambda_n]=[\Gamma:\Lambda_i][\Lambda_i:\Lambda_1\cap\dots\cap\Lambda_n],\]
and since the indices $[\Gamma:\Lambda_i]$ are pairwise coprime, this implies that $[\Gamma:\Lambda_1\cap\dots\cap\Lambda_n]$ is divisible by $[\Gamma:\Lambda_1]\dots[\Gamma:\Lambda_n]$. Thus, the group homomorphism $\Gamma/\Lambda_1\cap\dots\cap\Lambda_n\to \Gamma/\Lambda_1\times\dots\times\Gamma/\Lambda_n$ is an isomorphism, and it is $\Gamma$-equivariant.
\end{proof}

We are now ready to prove Theorem \ref{thm.mainraffinerorientable} and Theorem \ref{thm.mainraffinernonorientable}.

\begin{proof}[Proof of Theorem \ref{thm.mainraffinerorientable}]

Let $\Gamma$ be a non-amenable orientable surface group, and we fix a decomposition $\Gamma=\Gamma_A*_\Z\Gamma_B$. Let $0<t<1$ be a real number. Let $(p_n)_{n\geq 1}$ be a sequence of pairwise distinct prime numbers. We fix a sequence $(r_n)_{n\geq 1}$ such that each $r_n$ belongs to $]0,1[\cap\Z[1/p_n]$ and $\prod_{n\geq 1}r_n=t$. Such a sequence exists because each $\Z[1/p_n]$ is dense in $\R$. Finally, let $(\gamma_n)_{n\geq 0}$ be an enumeration of the elements in $\Gamma$ with $\gamma_0=1$, and $(\delta_n)_{n\geq 1}$ be an enumeration of the elements in $\Gamma_B\setminus\langle\langle Z\rangle\rangle^{\Gamma_B}$. For every $n\geq 1$, there exists by Theorem \ref{prop.sousgroupe} a finite index subgroup $\Lambda_n^t\leq \Gamma$ which doesn't contain $\gamma_n$, whose index $[\Gamma:\Lambda_n^t]$ is a power of $p_n$, such that the number of left cosets $x\in\Gamma/\Lambda_n^t$ that are fixed by any element of $\Gamma_A$ is equal to $r_n[\Gamma:\Lambda_n^t]$, and such that none of the left coset $x\in\Gamma/\Lambda_n^t$ is fixed by $\delta_n$. For every $n\geq 1$, let $\Gamma_n^t:=\Lambda_1^t\cap\dots\cap\Lambda_n^t$. The sequence $(\Gamma_n^t)_{n\geq 1}$ forms a chain in $\Gamma$ and we denote by $\alpha^t$ the corresponding ergodic profinite action, and by $\mu_t$ the profinite $\Gamma$-invariant probability measure on $\varprojlim \Gamma/\Gamma_n^t$. This is a \pmp{} ergodic minimal action and we will prove that it is allosteric. By construction of $\Lambda_n^t$, we have that
\[\bigcap_{n\geq 1}\Gamma_n^t=\{1\}.\]
This implies by Lemma \ref{lem.URSprofinitetrivial} that $\URS(\alpha^t)$ is trivial. Let us prove that each element of $\Gamma_B\setminus\langle\langle\Z\rangle\rangle^{\Gamma_B}$ acts essentially freely for $\alpha^t$. Let $\delta\in\Gamma_B\setminus\langle\langle\Z\rangle\rangle^{\Gamma_B}$. By Lemma \ref{lem.chinois}, the number of $x\in\Gamma/\Gamma_n^t$ such that $\delta x=x$ is equal to the number of $(x_1,\dots,x_n)\in\Gamma/\Lambda_1^t\times\dots\times\Gamma/\Lambda_n^t$ such that $(\delta x_1,\dots,\delta x_n)=(x_1,\dots,x_n)$. If $n$ is large enough, then this last number is zero by construction of $\Lambda_n^t$. Thus, Lemma \ref{lem.closed} implies that $\Fix_{\alpha^t}(\delta)$ is $\mu_t$-negligible.

Finally, let us prove that the actions $\alpha^t$ are not essentially free. By construction, the indices $[\Gamma:\Lambda_i^t]$ are pairwise coprime. Thus, Lemma \ref{lem.chinois} implies that the number of $x\in \Gamma/\Gamma_n^t$ that are fixed by every element in $\Gamma_A$ is equal to the number of $(y_1,\dots,y_n)\in \Gamma/\Lambda_1^t\times\dots\times \Gamma/\Lambda_n^t$ that are fixed for the diagonal action by every element in $\Gamma_A$. By construction of $\Lambda_i^t$, this number is equal to $r_1[\Gamma:\Lambda_1^t]\times\dots\times r_n[\Gamma:\Lambda_n^t]$ which is equal to $r_1\dots r_n[\Gamma:\Gamma_n^t]$. Thus, Lemma \ref{lem.closed} implies that the $\mu_t$-measure of the set of points whose stabilizer for $\alpha^t$ contains $\Gamma_A$ is $t$. In particular, this implies that $\IRS(\alpha^t)$ is non-trivial. Thus $\alpha^t$ is allosteric. Moreover, this also implies that for all $0<s<t<1$, the actions $\alpha^s$ and $\alpha^t$ are not measurably isomorphic, and thus not topologically isomorphic since every $\alpha^t$ is uniquely ergodic by Lemma \ref{lem.ergodicminimal}, and this finally implies that the measures $\IRS(\alpha^s)$ and $\IRS(\alpha^t)$ are distinct.
 \end{proof}

\begin{proof}[Proof of Theorem \ref{thm.mainraffinernonorientable}]

Let $\Sigma'$ be a non-orientable surface of genus $g\geq 3$. Consider the usual embedding of an orientable surface $\Sigma$ of genus $g-1$ into $\R^3$ in such a way that the reflexions in all $3$ coordinate planes map the surface to itself, and let $\iota$ to be the fixed-point free antipodal map $x\mapsto -x$. Then $\Sigma'$ is homeomorphic to the quotient of $\Sigma$ by $\iota$, and the covering $\Sigma \mapsto \Sigma/\iota\approx\Sigma'$ is called the orientation covering. We decompose $\Sigma$ as the union of two surfaces $\Sigma_A$ and $\Sigma_B$ with one boundary, of genus $\lvert A\rvert$ and $\lvert B\rvert$ respectively, with $\lvert A\rvert\leq\lvert B\rvert$, so that $\iota(\Sigma_A)\subset\Sigma_B$. Fix a point $p\in\Sigma_A\cap\Sigma_B$, then Van Kampen's Theorem implies that the fundamental group $\Gamma$ of the surface $\Sigma$ based at $p$ is isomorphic to $\Gamma_A*_\Z\Gamma_B$ with $\Gamma_A=\pi_1(\Sigma_A,p), \Gamma_B=\pi_1(\Sigma_B,p)$ and $\Z\approx \pi_1(\Gamma_A\cap\Gamma_B,p)$. The fundamental group $\Gamma'$ of $\Sigma'$ based at $p'=\iota(p)$ naturally contains the subgroup $\Gamma$ as an index-two subgroup. Fix a curve contained in $\Sigma_B$ that joins $p$ to $\iota(p)$. This produces an element $\gamma_0\in\Gamma'\setminus\Gamma$, that satisfies $\gamma_0\Gamma_A\gamma_0^{-1}\leq\Gamma_B$.

Let $(\alpha^t)_{0<t<1}$ be a continuum of allosteric $\Gamma$-actions on $(X_t,\mu_t)$ given by Theorem \ref{thm.mainraffinerorientable}. The actions $\beta^t: \Gamma'\curvearrowright (Y_t,\nu_t)$ induced by the $\Gamma$-actions $\alpha^t$ are allosteric, see Proposition \ref{prop.allostericsousgroupe}. Let us prove that the set of points in $Y_t$ whose stabilizer for $\beta^t$ contains $\Gamma_A$ has $\nu_t$-measure $t/2$. Since $\beta^t$ is an induced action and $[\Gamma':\Gamma]=2$, the $\Gamma'$-action $\beta^t$ is measurably isomorphic to a \pmp{} $\Gamma'$-action on $(X_t\times\{0,1\},\mu_t\times\text{ unif})$, still denoted by $\beta^t$, that satisfies the following two properties:
\begin{enumerate}
\item For every $\gamma\in\Gamma'\setminus\Gamma$, the sets $X_t\times \{0\}$ and $X_t\times \{1\}$ are switched by $\beta^t(\gamma)$.
\item For every $\gamma\in\Gamma$, for every $x\in X_t$, $\beta^t(\gamma)(x,0)=(\alpha^t(\gamma)x,0)$ and $\beta^t(\gamma)(x,1)=(\alpha^t(\gamma_0\gamma\gamma_0^{-1})x,1)$.
\end{enumerate}
This implies that for all $(x,\varepsilon)\in X_t\times\{0,1\}$, the subgroup $\Gamma_A$ is contained in $\Stab_{\beta^t}(x,\varepsilon)$ if and only if either $\varepsilon=0$ and $\Gamma_A$ is contained in $\Stab_{\alpha^t}(x)$, or $\varepsilon=1$ and $\gamma_0\Gamma_A\gamma_0^{-1}$ is contained in $\Stab_{\alpha^t}(x)$. Thus, the set of points whose stabilizer for $\beta^t$ contains $\Gamma_A$ has $\nu_t$-measure
\[\frac{t+\mu_t(\{x\in X_t\mid \gamma_0\Gamma_A\gamma_0^{-1}\leq\Stab_{\alpha^t}(x)\})}{2}.\]
In order to finish the proof, it is enough to prove that the intersection of $\gamma_0\Gamma_A\gamma_0^{-1}$ and $\Gamma_B\setminus\langle\langle\Z\rangle\rangle^{\Gamma_B}$ is non-trivial, since any element in $\Gamma_B\setminus\langle\langle\Z\rangle\rangle^{\Gamma_B}$ acts essentially freely for $\alpha^t$. The conjugation by $\gamma_0$ induces a group automorphism $\varphi:\Gamma\mapsto\Gamma$, such that $\varphi(\Gamma_A)\leq\Gamma_B$. Since $\Gamma_A$ is not contained in the derived subgroup $D(\Gamma)$, so is $\varphi(\Gamma_A)$. But the amalgamated subgroup $\Z$ is contained in $D(\Gamma)$, thus so is $\langle\langle \Z\rangle\rangle^{\Gamma_B}$. This implies that the intersection $\varphi(\Gamma_A)\cap(\Gamma_B\setminus\langle\langle\Z\rangle\rangle^{\Gamma_B})$ is non-empty. We deduce that the set of points whose stabilizer for $\beta^t$ contains $\Gamma_A$ has $\nu_t$-measure $t/2$. We conclude that the actions $\beta^t$ are neither measurably nor topologically pairwise isomorphic and that their IRS are pairwise disjoint as in Theorem \ref{thm.mainraffinerorientable}.
\end{proof}

\begin{rmq}\label{rmq.totipotent} Let $\alpha : \Gamma\curvearrowright(C,\mu)$ be an allosteric action. Then we have 
\[\supp(\IRS(\alpha))\subset \overline{\{\Stab_{\alpha}(x)\mid x\in C\}}.\]
This implies that the support of $\IRS(\alpha)$ doesn't contain any non-trivial subgroup with only finitely many conjugates, because otherwise the closure of the set $\{\Stab_\alpha(x)\mid x\in C\}$ would contain a closed minimal $\Gamma$-invariant set $\neq\{\{1\}\}$. Carderi, Gaboriau and Le Ma\^itre proved (personal communication) that the perfect kernel of a surface group coincides with the set of its infinite index subgroups. This implies that allosteric actions of surface groups are not totipotent (a \pmp{} action is \emph{\textbf{totipotent}} if the support of its IRS coincide with the perfect kernel of the group, see \cite{CarderiGaboriauLeMaitre}).\end{rmq}

\begin{rmq}\label{rmq.weaklymixing} A \pmp{} action $\Gamma\curvearrowright(X,\mu)$ is \emph{\textbf{weakly mixing}} if for every $\varepsilon >0$ and every finite collection $\Omega$ of measurable subsets of $X$, there exists a $\gamma\in\Gamma$ such that for every $A,B\in\Omega$ \[\lvert \mu(\gamma A\cap B)-\mu(A)\mu(B)\rvert <\varepsilon.\]
With this definition, it is easily seen that the restriction of a weakly mixing action to a finite index subgroup remains weakly mixing. Thus Proposition \ref{prop.mixing} implies that the IRS's of non-amenable surface groups we have constructed are not weakly mixing.
\end{rmq}

\begin{rmq} The proof of our main theorem applies mutatis mutandis to branched orientable surface groups, that is fundamental groups of closed orientable branched surfaces (see Figure \ref{fig.surfacebranchee}). These groups can be written as amalgams. Fix an integer $g\geq 2$ as well as $2g$ letters $x_1,y_1,\dots,x_g,y_g$. Fix a partition of $\{1,\dots,g\}$ into $n$ nonempty intervals $A_1,\dots,A_n$. Let $\Gamma_k$ be the free group generated by $x_i$ and $y_i$ for every $i\in A_k$, and let $\Z\to \Gamma_k$ be the injective homomorphism defined by sending the generator of $\Z$ to the product $\prod_{i\in A_k}[x_i,y_i]$. Then the amalgam $*_\Z\Gamma_i$
is a branched orientable surface group, and any branched orientable surface group can be obtained this way. The fundamental group of a closed orientable branched surface of genus $\geq 2$ is a residually $p$-finite group for every prime $p$, see \cite[Theorem $4.2$]{KimMccarron}. Thus our method of proof applies to branched orientable surface groups, with any $\Gamma_k$ in the role played by $\Gamma_A$ during the proof of Theorem \ref{thm.mainraffinerorientable}.

\begin{figure}[h!]
\centering
\def\svgwidth{\columnwidth}
\scalebox{0.6}{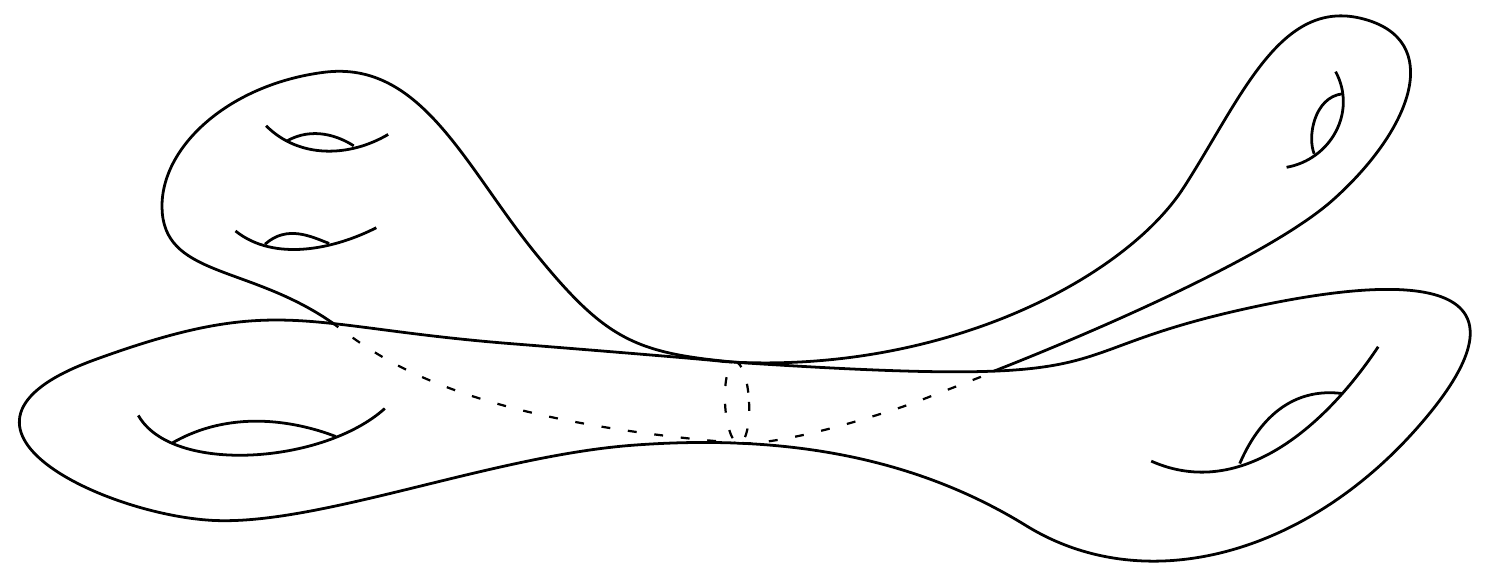}
\caption{A branched surface}
\label{fig.surfacebranchee}\end{figure}
\end{rmq}

\begin{qu} Is the fundamental group of a compact hyperbolic $3$-manifold allosteric? More generally, is the fundamental group of a compact orientable aspherical $3$-manifold allosteric?
\end{qu}

	\bibliographystyle{alpha}
	\nocite{*}
	\bibliography{biblio}

	\Addresses
\end{document}